\newtheorem{Theorem}{Theorem}[section]
\newtheorem{prop}[Theorem]{Proposition}
\newtheorem{Lemma}[Theorem]{Lemma}
\newtheorem{Remark}[Theorem]{Remark}
\newtheorem{Corollary}[Theorem]{Corollary}
\def\beq#1#2\eeq{%
        \begin{equation}%
        \label{#1}%
            #2%
        \end{equation}%
   }
\title[Graph-associahedra and inversion]{Differential algebra of polytopes and inversion formulas}
\author{V.M. Buchstaber}\address{Steklov Mathematical Institute and Moscow State University, Russia}
\email{buchstab@mi-ras.ru}
\author{A.P. Veselov}
\address{Department of Mathematical Sciences,
Loughborough University, Loughborough LE11 3TU, UK}
\email{A.P.Veselov@lboro.ac.uk}
\begin{document}

\maketitle

\begin{abstract}
 We use the differential algebra of polytopes to explain the known remarkable relation of the combinatorics of the associahedra and permutohedra with the universal compositional and multiplicative inversion formulas for the formal power series. This approach allows to single out the associahedra and permutohedra among all graph-associahedra and emphasizes the significance of the differential equations for special sequences of simple polytopes derived earlier by one of the authors. We discuss also the link with the geometry of Deligne-Mumford moduli spaces $\bar M_{0,n}$ and permutohedral varieties, as well as the interpretation of the combinatorics of cyclohedra in relation with the classical Fa\`a di Bruno's formula.
  \end{abstract}


\section{Introduction}

The following natural question goes back at least to Lagrange.
For a given power series 
$\alpha(u)=u+\sum_{n\geq 1}a_n u^{n+1}$ find the inversion $\beta(v)$ under the substitution: $\alpha(\beta(v))\equiv v, \,\, \beta(\alpha(u))\equiv u.$
The coefficients of such series $\beta(v)=v+\sum_{n\geq 1}b_n v^{n+1}$ are known to be certain polynomials with integer coefficients:
$$
b_1=-a_1, \, b_2=-a_2+2a_1^2,\,\, b_3=-a_3+5a_1a_2-5a_1^3,
$$
$$
b_4=-a_4+6a_1a_3+3a_2^2-21a_1^2a_2+14a_1^4.
$$

Remarkably these coefficients can be interpreted in terms of the combinatorics of the {\it  associahedra} (or, {\it Stasheff polytopes}), see \cite{AA-2023, Loday} for the details.
For example, the formula for $b_4$ says that $3D$ associahedron has 6 pentagonal and 3 quadrilateral faces, 21 edges and 14 vertices (see Fig. 1, where we show also its realisation as a truncated cube following \cite{BV}).

\begin{figure}[h]
\includegraphics[width=35mm]{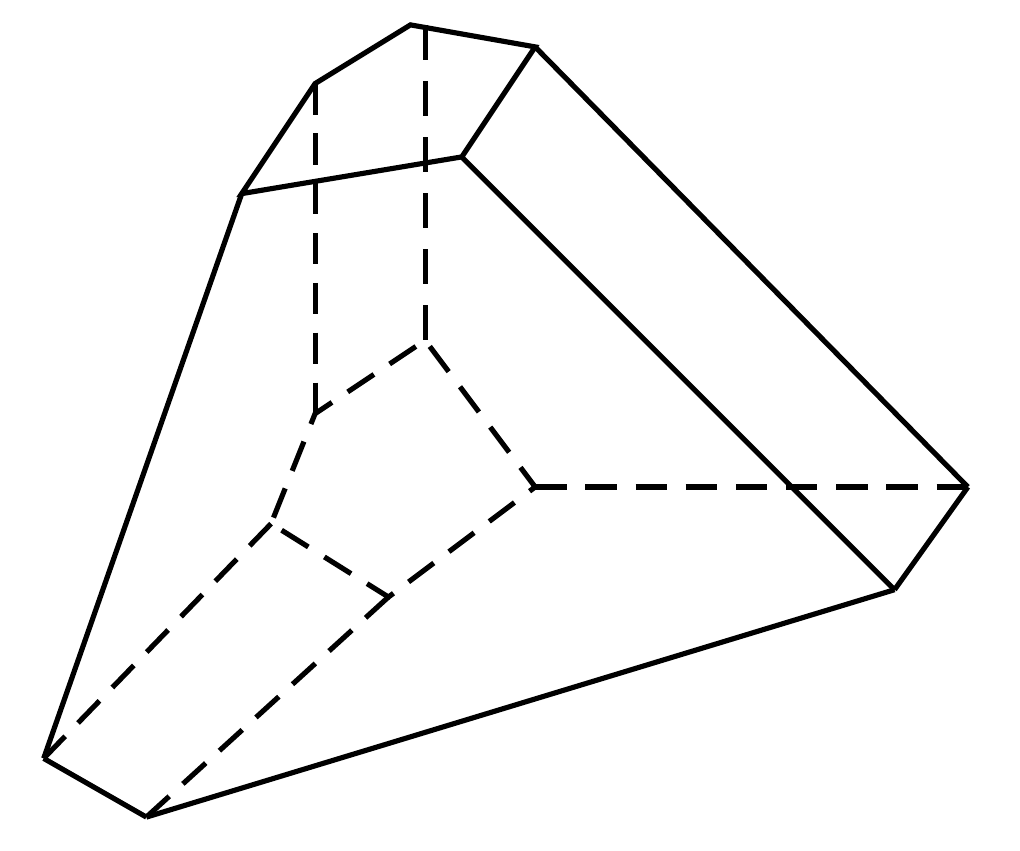}  \quad \includegraphics[width=30mm]{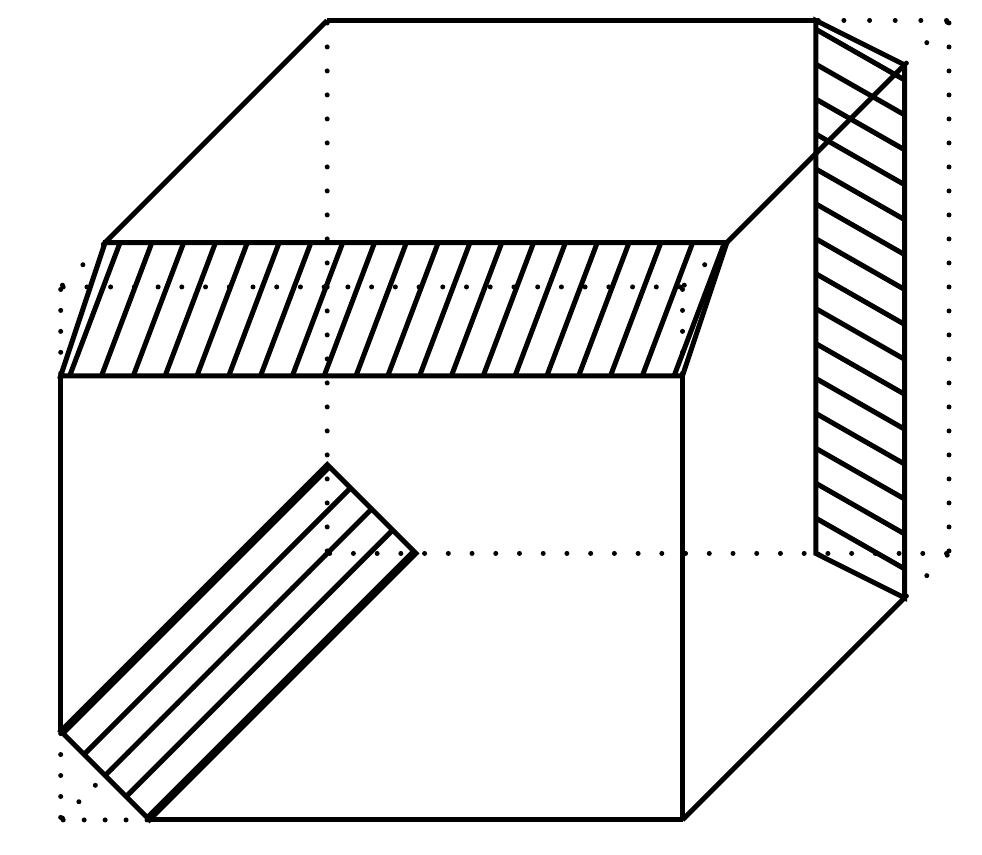} 
\caption{3D associahedron and its realisation as a truncated cube.}
\end{figure}


It is difficult to trace who was the first to observe this remarkable fact. The first combinatorial description of the coefficients $b_n$ seems to belong to Raney \cite{Raney}. Stanley \cite{Stanley} provided 3 proofs in terms of tree combinatorics, known to be closely related to the associahedron. Loday \cite{Loday} described this explicitly in terms of the associahedra, see also recent book by Aguiar and Ardila \cite{AA-2023}, who used the theory of Hopf monoids to derive this connection. 

The aim of this paper is to provide one more derivation of this link using the differential equations for the generating function of the associahedra derived by one of the authors \cite{B-2008}. This allows us also to prove another known remarkable formula, expressing the multiplicative inversion of a formal power series in terms of combinatorics of permutohedra (see \cite{AA-2023}) and to provide an interpretation of the combinatorics of cyclohedra in relation with the classical Fa\`a di Bruno's formula, which was first discovered by Aguiar and Bastidas \cite{Bast}. In the case of the stellohedra we derive the relation with permutohedra, which seems to be new.

We discuss also the link with the rich theory of Deligne-Mumford moduli spaces $\bar M_{g,n}$, which are known to be related to associahedra \cite{Kap}, inversion formulas \cite{McMullen} and to the KdV hierarchy \cite{Witten}.

\section{Differential algebra of simple polytopes}

We start with a brief description of the differential algebra of simple polytopes following \cite{B-2008, BP}. For the general theory of convex polyhedra we refer to Ziegler \cite{Ziegler}.

Let $\mathfrak P:=\sum_{n\geq 0} \mathfrak P^n$ be the free abelian group generated by all convex polytopes, considered up to combinatorial equivalence and naturally graded by the dimension of the polytopes.
The product of polytopes turns $\mathfrak P$ into a graded commutative ring with the unit given by the point, considered as $0$-dimensional polytope. Simple polytopes form a graded subring $\mathfrak S \subset \mathfrak P.$ Note that the ring $\mathfrak P$ is different both from the polytope algebra \cite{P_Mc2} of convex polytopes in $\mathbb R^n$ with product given by the Minkowski sum and from the Grothendieck ring of $\Gamma$-rational polytopes considered in \cite{Nicaise}.

A polytope is called {\it indecomposable} if it cannot be represented as a product of polytopes of positive dimension.

\begin{Theorem} [\cite{BE}]
The ring $\mathfrak P$ is a polynomial ring generated by indecomposable combinatorial polytopes, so any combinatorial polytope can be uniquely represented as a product of the indecomposable polytopes.
\end{Theorem}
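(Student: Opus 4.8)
The plan is to reduce the statement to the fact that the commutative monoid of combinatorial polytopes under Cartesian product is free on the indecomposables, so that its monoid ring $\mathfrak P$ is precisely the polynomial ring on them. Existence of a factorization into indecomposables is immediate by induction on dimension: a positive-dimensional polytope is either indecomposable or a product of two polytopes of strictly smaller positive dimension, which factor by the inductive hypothesis (the segment in dimension one being indecomposable). The entire content is therefore the \emph{uniqueness} of this factorization up to reordering.

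My main tool is the reduced face poset $\hat{\mathcal L}(P)$ of nonempty faces of $P$, ordered by inclusion and graded by dimension (vertices at rank $0$, with greatest element $P$). Since combinatorial equivalence is by definition an isomorphism of face lattices, the assignment $P\mapsto\hat{\mathcal L}(P)$ is a faithful invariant of combinatorial type. The basic input is the product formula $\hat{\mathcal L}(P\times Q)\cong\hat{\mathcal L}(P)\times\hat{\mathcal L}(Q)$, valid because every nonempty face of $P\times Q$ is uniquely a product $F\times G$ of nonempty faces, with ranks adding. Thus a geometric product decomposition induces a direct product decomposition of the connected, graded, top-bounded poset $\hat{\mathcal L}(P)$.

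The crux is the converse: a nontrivial poset decomposition $\hat{\mathcal L}(P)\cong X\times Y$ is always realized geometrically. Given such a decomposition, pick minimal elements $x_0\in X$ and $y_0\in Y$, and let $P_1,P_2$ be the faces of $P$ corresponding to $(1_X,y_0)$ and $(x_0,1_Y)$. Computing the principal lower sets shows $\hat{\mathcal L}(P_1)\cong X$ and $\hat{\mathcal L}(P_2)\cong Y$; these are honest polytopes, since faces of polytopes are polytopes, and their dimensions are $\dim X$ and $\dim Y$, both positive when the decomposition is nontrivial. By the product formula $\hat{\mathcal L}(P_1\times P_2)\cong X\times Y\cong\hat{\mathcal L}(P)$, and faithfulness gives $P\cong P_1\times P_2$ combinatorially. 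Hence a polytope is indecomposable if and only if its reduced face poset is directly indecomposable as a poset.

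With this equivalence in hand, uniqueness follows from a poset refinement theorem: I would invoke Hashimoto's theorem that a connected poset with a least (equivalently, by duality, a greatest) element admits a unique factorization into directly indecomposable factors. Applying it to $\hat{\mathcal L}(P)$, any two geometric factorizations $P=\prod_i A_i=\prod_j B_j$ into indecomposables induce two factorizations $\prod_i\hat{\mathcal L}(A_i)=\prod_j\hat{\mathcal L}(B_j)$ into directly indecomposable posets; uniqueness of the latter, together with faithfulness, forces the multisets $\{A_i\}$ and $\{B_j\}$ to coincide. This shows the monoid of polytopes is free commutative on the indecomposables, whence $\mathfrak P$ is the asserted polynomial ring. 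The main obstacle I anticipate is pinning down the correct poset-theoretic uniqueness statement and verifying its hypotheses: connectedness is automatic from the top element, but the compatibility of the grading with the decomposition and the realizability step of the previous paragraph must be handled carefully; everything else is bookkeeping.
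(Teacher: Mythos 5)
Your argument is correct, but note that the paper does not actually prove this theorem: it is quoted from \cite{BE}, and the text explicitly defers the proof to that paper and to Proposition 1.7.2 of \cite{BP}, so there is no in-paper argument to compare against line by line. Both you and the cited sources reduce the statement to the same core fact, namely that the commutative monoid of combinatorial polytopes under Cartesian product is free on the indecomposables, so that its monoid ring $\mathfrak P$ is the polynomial ring on them; the whole content is uniqueness of factorization. Where your route differs is in how uniqueness is obtained. You translate decomposability of $P$ into direct decomposability of the reduced face poset $\hat{\mathcal L}(P)$, and you correctly handle both directions: the product formula for the nonempty faces of $P\times Q$ gives one implication, and your realizability step (passing to the faces corresponding to $(1_X,y_0)$ and $(x_0,1_Y)$ and computing their principal lower sets) gives the converse. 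That converse is genuinely needed: without it an indecomposable polytope could a priori have a directly decomposable face poset, and the refinement argument could then cut across the polytope factors. You then outsource uniqueness to Hashimoto's refinement theorem for direct product decompositions of connected posets; since $\hat{\mathcal L}(P)$ is finite and connected (it has a greatest element), the theorem does apply, and faithfulness of $P\mapsto\hat{\mathcal L}(P)$ converts the poset statement back into the polytope statement. This is a clean and legitimate proof; its only cost is the reliance on a nontrivial external poset-theoretic theorem, whereas \cite{BE} establishes unique factorization of polytopes by a direct combinatorial argument. A minor remark: the grading by dimension that you carry along plays no essential role, as finiteness and connectedness of $\hat{\mathcal L}(P)$ already suffice for the refinement theorem.
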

The proof can be found in \cite{BE} (see also Prop.1.7.2 in \cite{BP}). Note that if we introduce the second grading in $\mathfrak P$ by the number of the facets of polytopes, then the number of generators in any bi-graded component of $\mathfrak P$ will be finite \cite{BP}.


Following \cite{B-2008} introduce the {\it derivation} $d$ in $\mathfrak P$ by defining the boundary $dP$ of a given polytope $P$ simply as the sum of all facets of $P$ considered as elements in $\mathfrak P$. It is easy to check that $d$ preserves the subring $\mathfrak S \subset \mathfrak P$ and satisfies the Leibnitz identity in $\mathfrak P$
$$
d(P_1P_2)=(dP_1)P_2+P_1(dP_2).
$$
This supplies $\mathfrak P$ with a structure of the differential ring, with  $\mathfrak S$ being its differential subring. Note that in our case $d^2\neq 0$ in contrast with the differential introduced in \cite{Gaif}.

We will be interested in the special class of simple polytopes known as {\it graph-associahedra}.
They were introduced and studied in the work of Carr and Devadoss \cite{CD} and independently by Toledano Laredo \cite{TL} under the name De Concini–Procesi associahedra (see also Postnikov \cite{Post}). This class contains the classical series of permutohedra and associahedra.

Let $\Gamma$ be a connected simple (no loops or multiple edges) graph with the set of $n$ vertices, which we identify with $[n]:=\{1,2,\dots, n\}.$
The corresponding graph-associahedron $P_\Gamma$ is a particular case of the nestohedron \cite{FS} with the building set $\mathcal B=\mathcal B(\Gamma)$ consisting of the connected induced subgraphs of $\Gamma$, or equivalently as the subsets $S\subset [n]$ such that the restriction $\Gamma|_S$ is connected.
Explicitly $P_\Gamma$ can be realised as the following $(n-1)$-dimensional convex polytope
\beq{PG}
P_\Gamma=\{x\in \mathbb R^{n}:  \sum_{i=1}^{n}x_i= 3^{n}, \,\, \sum_{i\in S}x_i\geq 3^{|S|},\, S\in \mathcal B, \, S\neq [n]\},
\eeq
where $|S|$ is the number of elements in $S$ (see \cite{TL}).

The boundary of the graph-associahedra can be given by the following formula (see e.g. \cite{BP}, Prop. 1.7.16).
For any connected induced subgraph $S$ of $\Gamma$ with vertex set $[S]\subset [n]$ define the graph $\Gamma/S$ with the vertex set $[n]\setminus [S]$ having an edge between two vertices $i$ and $j$ whenever they are path-connected in the restriction $\Gamma|_{S\cup\{i,j\}}.$ Then the boundary of the graph-associahedron $P_\Gamma$ can be given combinatorially as 
\beq{boundary}
dP_\Gamma=\sum_{S \in \mathcal B(\Gamma), \, S \neq \Gamma}P_{S}\times P_{\Gamma/S}.
\eeq

\begin{prop}
For any connected graph $\Gamma$ the corresponding graph-assiciahedron $P_\Gamma$ is indecomposable.
\end{prop}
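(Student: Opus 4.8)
The plan is to translate indecomposability of $P_\Gamma$ into a connectivity statement about its facets. By the boundary formula \mref{boundary} the facets of $P_\Gamma$ are indexed by the proper connected subsets $S\in\mathcal B(\Gamma)$ with $S\neq[n]$, the facet $F_S$ being combinatorially $P_S\times P_{\Gamma/S}$. I would first record the elementary product criterion: if a polytope $P$ is a product $Q_1\times Q_2$ with $\dim Q_1,\dim Q_2\ge 1$, then its facets are exactly the $F\times Q_2$ and the $Q_1\times G$ (with $F,G$ facets of $Q_1,Q_2$), and any facet of the first kind meets any facet of the second kind in the codimension-two face $F\times G$. Hence, forming the graph $N(P)$ on the set of facets with an edge joining two facets \emph{that do not intersect}, a nontrivial product forces $N(P)$ to be disconnected, since there are no edges between the two nonempty families. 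So it suffices to prove that $N(P_\Gamma)$ is connected.

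Second, I would invoke the standard description of the faces of a nestohedron by nested sets \cite{FS, Post} (see also \cite{BP}): two facets $F_S$ and $F_T$ intersect precisely when $\{S,T\}$ is a nested set, that is, when $S\subseteq T$, $T\subseteq S$, or $S\cap T=\emptyset$ with $S\cup T\notin\mathcal B(\Gamma)$. The only consequence I actually need is one special case of the contrapositive: \emph{if $S$ and $T$ are disjoint and $S\cup T$ is connected (so $S\cup T\in\mathcal B(\Gamma)$), then $F_S\cap F_T=\emptyset$}, i.e. $\{S,T\}$ is an edge of $N(P_\Gamma)$.

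The connectivity of $N(P_\Gamma)$ then follows in two short steps, both using that $\Gamma$ is connected. For distinct vertices $i,j$ the singleton facets $F_{\{i\}},F_{\{j\}}$ are disjoint exactly when $\{i,j\}\in\mathcal B(\Gamma)$, i.e. when $ij$ is an edge of $\Gamma$; thus the singletons span a copy of $\Gamma$ inside $N(P_\Gamma)$, which is connected since $\Gamma$ is. Next, for an arbitrary facet $S$ (so $\emptyset\neq S\subsetneq[n]$) connectivity of $\Gamma$ yields a vertex $i\notin S$ adjacent to $S$; then $S\cup\{i\}$ is connected, hence lies in $\mathcal B(\Gamma)$, and by the special case above $F_S$ and $F_{\{i\}}$ are disjoint, so $S$ is joined in $N(P_\Gamma)$ to the singleton $\{i\}$. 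Every facet is therefore adjacent to a singleton, and all singletons lie in one component, so $N(P_\Gamma)$ is connected and $P_\Gamma$ is indecomposable. (For $n=1$, $P_\Gamma$ is a point and there is nothing to prove.)

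The routine parts above are genuinely routine; the only place demanding care is the combinatorial input of the first two paragraphs, namely the product criterion and the nested-set intersection rule, which is why I would state those precisely before using them. It is worth noting that connectivity of $\Gamma$ is essential and the argument is tight: for a disconnected graph $\Gamma=\Gamma_1\sqcup\Gamma_2$ one has $P_\Gamma=P_{\Gamma_1}\times P_{\Gamma_2}$, matching the fact that the singleton subgraph of $N(P_\Gamma)$ is exactly $\Gamma$ and is disconnected in that case.
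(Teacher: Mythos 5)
Your proof is correct, but it follows a genuinely different route from the paper's. The paper argues by induction on the number of vertices: assuming $P_\Gamma=P_1\times P_2$, it compares the Leibniz expansion $dP_1\times P_2+P_1\times dP_2$ with the boundary formula \mref{boundary}, notes that the terms $P_{\Gamma/S}$ for one-vertex $S$ are indecomposable by the inductive hypothesis, and invokes the uniqueness of factorization in $\mathfrak P$ (Theorem 2.1) to force one factor to be a segment, yielding a contradiction. Your argument instead reduces indecomposability to connectivity of the facet non-intersection graph $N(P_\Gamma)$ and verifies that connectivity directly via the nested-set criterion for when two facets of a nestohedron meet. What the paper's proof buys is that it stays entirely inside the differential algebra of polytopes --- it uses only the boundary formula and unique factorization, which is in keeping with the paper's theme and requires no facts about face intersections beyond \mref{boundary}. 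What your proof buys is independence from both Theorem 2.1 and induction: it is a direct, non-recursive argument, it isolates a clean and reusable sufficient criterion for indecomposability (connectivity of $N(P)$), and it makes transparent exactly where connectivity of $\Gamma$ enters, including the converse phenomenon $P_{\Gamma_1\sqcup\Gamma_2}=P_{\Gamma_1}\times P_{\Gamma_2}$ for disconnected graphs. The price is the imported combinatorial input (the description of facets of a product and the nested-set intersection rule from \cite{FS,Post}), which you correctly flag as the only steps requiring care; both are standard and both are available in the references the paper already cites.
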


\begin{proof}
We can prove this by induction in the number $k$ of the vertices of $\Gamma$. For $k=1,2$ this is obvious. Let this be true for all $k<n$ and consider $P_\Gamma$ for $\Gamma$ with $n$ vertices. Assume that  $P_\Gamma=P_1\times P_2$ is decomposable, so that the boundary $dP_\Gamma=dP_1\times P_2+P_1\times dP_2.$

On the other hand the boundary can be given by formula (\ref{boundary}), so
$$
dP_1\times P_2+P_1\times dP_2=\sum_{S \in \mathcal B(\Gamma), \, S \neq \Gamma}P_{S}\times P_{\Gamma/S}.
$$
By induction in the right-hand side we have the products of the indecomposable polytopes, containing $P_{S}\times P_{\Gamma/S}=P_{\Gamma/S}$ for any one-vertex $S$. From the uniqueness part of Theorem 2.1 it follows that either $P_1$ or $P_2$ must be a segment, which easily leads to a contradiction.
\end{proof}

There are two famous particular cases of graph-associahedra: associahedra $\mathfrak a_n$ (or Stasheff polytopes, traditionally denoted as $K_{n+1}$) and permutohedra $\pi_n$, corresponding to path and complete graphs with $n$ vertices respectively.

For the associahedron $\mathfrak a_n$ the corresponding graph $\Gamma$ is a path with edges $(k,k+1), \, 1\leq k \leq n$. The connected subgraphs $S$ are strings $[i,j]=(i, i+1,\dots, j-1, j)$, which can be naturally labelled by the brackets in the product
$x_1x_2\dots x_{i-1}(x_i\dots x_j)x_{j+1}\dots x_{n+1}$, linking this with the original Stasheff's description (see \cite{PSZ, Ziegler} for the history of this remarkable polytope and its role in combinatorics and algebra). The number of vertices of $\mathfrak a_n$ is the Catalan number $C_{n}=\frac{1}{n+1}{ 2n \choose n}.$ One can also describe $\mathfrak a_n$ also as the Newton polytope of the polynomial 
$\prod_{1\leq i<j\leq n}(x_i+x_{i+1}+\dots+x_{j-1}+x_{j}).$ 
For $n=3$ we have pentagon, for $n=4$ 3D associahedron, which is shown on Fig. 1 together with its realisation as 2-truncated cube found in \cite{BV}.

Permutohedron $\pi_n$ corresponds to the complete graph with $n$ vertices and can be described as the convex hull of the points $\sigma(\rho)\in \mathbb R^{n}, \, \sigma \in S_{n}$ with $\rho=(1,2,\dots,n),$ or as the Newton polytope of the Vandermonde polynomial
$\prod_{1\leq i<j\leq n}(x_i-x_j).$ 
For $n=3$ we have hexagon, for $n=4$ - the truncated octahedron shown on Fig. 2.

\begin{figure}[h]
\includegraphics[width=25mm]{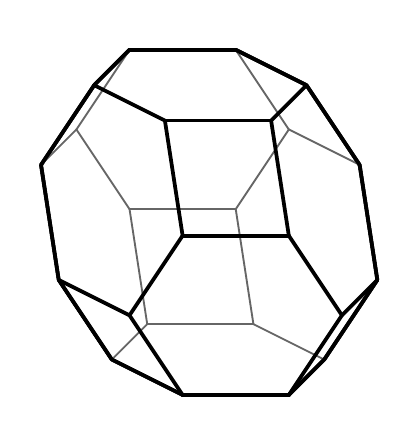} 
\caption{3D permutohedron $\pi_4$.}
\end{figure} 

Our first observation is that these two sequences of combinatorial polytopes can be characterised in terms of the differential algebra of polytopes as follows.

Let $P_k, \, k\geq 0$ be a sequence of simple polytopes with the dimension of $P_k$ equal to $k.$ Consider the following question: when the corresponding subring of $\mathfrak S$ generated by these polytopes is closed under differentiation $d$ (and thus is a differential subring of $\mathfrak S$)?

Assuming that $P_k=P_{\Gamma_{k+1}}$ are graph-associahedra of the connected graphs $\Gamma_{k+1}$ with $k+1$ vertices, we can give the following answer.

\begin{Theorem} 
A sequence of graph-associahedra $P_k, \, k\geq 0$ generates a subring of $\mathfrak S$ closed under differentiation $d$ only in two cases: associahedra $P_k=\mathfrak a_{k+1}$ and permutohedra $P_k=\pi_{k+1}.$
\end{Theorem}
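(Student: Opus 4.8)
The plan is to first identify the subring $R\subset\mathfrak S$ generated by the sequence. Since each $P_k$ is indecomposable (Proposition 2.2) and the $P_k$ have pairwise distinct dimensions, Theorem 2.1 shows that they form part of the free generating set of the polynomial ring $\mathfrak P$; hence $R=\mathbb Z[P_0,P_1,\dots]$, and an element of $\mathfrak P$ lies in $R$ precisely when every indecomposable polytope occurring in its unique monomial expansion is one of the $P_j$. By the Leibniz rule, $R$ is $d$-closed if and only if $dP_k\in R$ for every $k$. I would then feed in the boundary formula \mref{boundary}: all its coefficients equal $+1$, so no cancellation occurs, and $dP_k\in R$ forces each facet $P_{\Gamma_{k+1}|_S}\times P_{\Gamma_{k+1}/S}$ to factor entirely into generators $P_j$. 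This extracts two constraints. \textbf{(H)} The \emph{hereditary} constraint: the indecomposable factor $P_{\Gamma_{k+1}|_S}$ has dimension $|S|-1$, so it must equal $P_{|S|-1}=P_{\Gamma_{|S|}}$; that is, every connected induced subgraph of $\Gamma_{k+1}$ on $m$ vertices has the same graph-associahedron as $\Gamma_m$. \textbf{(C)} The \emph{contraction} constraint: $P_{\Gamma_{k+1}/S}$ likewise decomposes into chosen generators.

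The dichotomy is then decided by $\Gamma_3$, which is one of the only two connected graphs on three vertices: the $3$-vertex path (pentagon $\mathfrak a_3$) or the triangle $K_3$ (hexagon $\pi_3$). In the complete case $\Gamma_3=K_3$, constraint \textbf{(H)} with $m=3$ says that every connected induced $3$-vertex subgraph of $\Gamma_n$ must again be a triangle, since the pentagon and hexagon are distinct polytopes. Equivalently $\Gamma_n$ has no induced path on three vertices, which forces each connected component to be a clique; being connected, $\Gamma_n=K_n$, and $P_k=\pi_{k+1}$. This case is immediate from the $m=3$ constraint alone.

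The path case $\Gamma_3=$ (the $3$-vertex path) is the main obstacle, because \textbf{(H)} by itself is too weak. I would argue by induction, first pinning down $\Gamma_4$: the triangle-free connected graphs on four vertices are the $4$-vertex path, the star $K_{1,3}$, and the cycle $C_4$, and the latter two are excluded by \textbf{(C)}, since contracting the high-degree vertex produces a triangle whose hexagonal graph-associahedron $\pi_3$ cannot be the required pentagon $\mathfrak a_3$. For general $n$, triangle-freeness (from $m=3$) together with $\Gamma_4|_S\cong$ (the $4$-vertex path) (from $m=4$) rules out a vertex of degree $\geq 3$: its three pairwise non-adjacent neighbours would induce a star $K_{1,3}$, whose graph-associahedron (the stellohedron) is not $\mathfrak a_4$. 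Hence $\Gamma_n$ has maximum degree $\leq 2$ and, being connected and triangle-free, is a path or a cycle $C_n$. The cycle survives \textbf{(H)} — its proper connected induced subgraphs are genuine paths, consistent with the lower $\Gamma_m$ — so here I must invoke \textbf{(C)}: contracting a single vertex of $C_n$ yields $C_{n-1}$, so the corresponding facet contains the indecomposable cyclohedron $P_{C_{n-1}}$ of dimension $n-2$, which would have to equal $\mathfrak a_{n-1}$; as the cyclohedron differs from the associahedron, this is a contradiction. Therefore $\Gamma_n$ is the path on $n$ vertices and $P_k=\mathfrak a_{k+1}$.

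Finally, to see that the two families genuinely occur, one checks that restrictions and contractions of a path are again paths and those of a complete graph are again complete graphs, so \mref{boundary} keeps $dP_k$ inside $R$ in both cases; thus the word ``only'' is sharp. The decisive and most delicate point is the path case: \textbf{(H)} cannot by itself distinguish a path from a cycle, and it is precisely the contraction term in the boundary — which turns $C_n$ into $C_{n-1}$ and hence forces cyclohedra where associahedra are required — that singles out the path. This is exactly where the boundary/contraction structure, rather than the hereditary structure, does the work, and it foreshadows the special role of the cyclohedra discussed later.
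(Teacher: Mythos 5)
Your proof is correct and follows the paper's overall strategy: you reduce the closure condition, via the boundary formula \mref{boundary}, unique factorization (Theorem 2.1) and indecomposability (Proposition 2.2), to the purely graph-theoretic statement that every connected induced subgraph of $\Gamma_n$ and every contraction $\Gamma_n/S$ must reproduce earlier graphs in the sequence, and you then classify such sequences by induction with the dichotomy set at $\Gamma_3$ --- exactly the content of the paper's Lemma. You diverge only in two local sub-arguments. To bound the degree in the path case, the paper contracts a vertex $V$ of degree $m\geq 3$ and notes that $\Gamma/V$ contains $K_m$, which no path can; you instead use the hereditary constraint, exhibiting an induced $K_{1,3}$ and invoking the fact that the stellohedron is not $\mathfrak a_4$ (true: $16$ versus $14$ vertices). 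To exclude the cycle $C_n$, the paper contracts a subpath on $n-3$ vertices to obtain $K_3$ directly; you contract a single vertex to obtain $C_{n-1}$ and invoke the fact that the cyclohedron $W_{n-1}$ is not $\mathfrak a_{n-1}$ (true, by vertex counts). Both variants are valid and connect nicely with the cyclohedra and stellohedra of Sections 6--7, though the paper's choices are more self-contained, needing only the distinction between $3$-vertex paths and triangles. One cosmetic slip: for $C_4$ there is no ``high-degree vertex,'' but contracting any vertex of $C_4$ still yields $K_3$, so your argument goes through unchanged.
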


\begin{proof}
%
%

Let $\Gamma_{n}, \, n\geq 1$ be the sequence of connected graphs, such that the corresponding graph-associahedra $P_k=P_{\Gamma_{k+1}}, \, k\geq 0$ (which are indecomposable by Proposition 2.2) freely generate a polynomial subring of $\mathfrak S$, which is closed under differentiation $d.$

We need the following result from graph theory.

\begin{Lemma} 
Let $\Gamma_n, \, n \in \mathbb N$ be a sequence of connected graphs with $n$ vertices, such that any connected induced subgraph $S$ of $\Gamma_n$ and the corresponding graph $\Gamma_n/S$ are equivalent to some of $\Gamma_j$ with $j<n.$ Then this must be the sequence of paths or complete graphs.
\end{Lemma}


\begin{proof} We will prove this by induction in $n.$ For $n\leq 3$ the claim is obvious since $\Gamma_3$ is either a path or complete graph $K_3$ with 3 vertices. Assume now that the claim is true for $n\leq k$ and prove it for $n=k+1.$

Assume first that all induced subgraphs of $\Gamma=\Gamma_n$ are paths. Then $\Gamma$ must be either a tree, or a cycle. Indeed, if $\Gamma$ is not a tree, it must contain a cycle $C \subseteq\Gamma_n,$ so the corresponding induced subgraph $\bar C$ is not a path. The only exception is when $C=\Gamma,$ so $\Gamma$ is a cycle.

Let $\Gamma$ be a tree. If $\Gamma$ has a vertex $V$ of index $m\geq 3,$ then $\Gamma/V$ contains a complete graph $K_m$, which is impossible.
So all vertices of $\Gamma$ have index 1 or 2 and $\Gamma$ is a path.
If $\Gamma$ is a cycle with $n>3$ vertices, then for its subpath $S$ with $n-3$ vertices the graph $\Gamma/S=K_3$, which contradicts the assumption.
Thus $\Gamma_n$ must be a path in this case.

Assume now that all induced subgraphs of $\Gamma$ are complete graphs. If $\Gamma$ has two vertices $V_1$ and $V_2$ which are not connected by an edge, then
the shortest path connecting them is an induced subgraph, which is not complete. Thus $\Gamma_n$ must be a complete graph in this case.  
\end{proof}

The theorem now follows from Lemma and the boundary formula (\ref{boundary}) since the paths and complete graphs correspond to the associahedra and permutohedra respectively.
\end{proof}

\begin{Remark}
Note that there are other sequences of the simple polytopes $P_k$ generating a subring in $\mathfrak S$ invariant under differentiation (e.g. simplices $\Delta_k$, cubes $\Box_k$), but the general problem of description of all such sequences seems to be open. We conjecture that in the indecomposable case the answer is given by the sequences of simplices, associahedra and permutohedra.
\end{Remark}

\section{PDEs for graph-associahedra and inversion formulas}

We start with the differential equations for the generating functions of some special polytopes, derived by one of the authors \cite{B-2008} (see also \cite{BK} and \cite{BP}, section 1.8).

Let $P$ be a convex polytope of dimension $n$ and $\mathcal F_k(P)$ be the set of all faces of $P$ of  codimension $k$ considered as elements of $\mathfrak P$. 
Consider the corresponding generating function $F_P(t) \in \mathfrak P[t]$ defined by
\beq{F}
F_P(t)=\sum_{k=0}^n t^k \sum_{f\in \mathcal F_k(P)} f= \sum_{f\in \mathcal F(P)} f \, t^{\text{codim} f}.
\eeq
One can check that the valuation map $F_{t_0}: \mathfrak P \to \mathfrak P$ sending $P$ to $F_P(t_0)$ is a ring-homomorphism for all $t_0$.

Define now the  {\it motivic interior} $P^\circ$ of a polytope $P$ as the alternating sum of all the faces of $P:$
\beq{motivic}
P^\circ:=F_P(-1)=\sum_{f\in \mathcal F(P)}  (-1)^{\text{codim} f} f \in \mathfrak P.
\eeq

At the level of the set-theoretical characteristic functions this formula agrees with a well-known result in the theory of finitely additive measures of convex polytopes  \cite{P_McMullen,PukhKhov}
with $P^\circ=relint(P)$ being the standard {\it relative interior} \cite{Ziegler} of the polytope $P$. Note that the polytope $P$ can be represented simply as
$$
P=\sqcup_{f\in \mathcal F(P)}  relint(f),
$$
where $\sqcup$ denotes the disjoint union of sets (see \cite{Ziegler}). More justification for the terminology is given by the theory of toric varieties, see Section 5 below.

For simple polytopes we have the following formula, which will be important for us.

\begin{Lemma}
For any simple convex polytope $P$ we have
\beq{Lemma}
F_P(t)=e^{td}P:=\sum_{k\geq 0}\frac{t^k}{k!} d^kP,
\eeq
which can be used to characterise the simple polytopes among all polytopes.
\end{Lemma}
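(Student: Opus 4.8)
The plan is to give a purely combinatorial description of the iterated derivative $d^k P$ and then to read off the multiplicity with which each face occurs. Everything reduces to counting saturated chains in the face poset, and the only place where simplicity is really used is the final count.

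First I would unwind the definition of $d$. Since $d$ is additive and sends any polytope, regarded as a single element of $\mathfrak P$, to the formal sum of its facets, and since the facets of a face $f$ of $P$ are exactly the faces of $P$ of one higher codimension contained in $f$, iterating gives
\[
d^k P=\sum_{P=f_0\supsetneq f_1\supsetneq\cdots\supsetneq f_k} f_k,
\]
the sum ranging over all saturated chains in the face poset of $P$ with $f_i$ of codimension $i$. (No appeal to the Leibniz rule is needed here, as we only ever differentiate formal sums of individual faces.) Collecting terms by the terminal face, this reads $d^k P=\sum_{g\in\mathcal F_k(P)}N(g)\,g$, where $N(g)$ is the number of maximal chains in the interval $[g,P]$ of the face lattice.

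The key step is to evaluate $N(g)$ using simplicity. For a simple polytope every face $g$ of codimension $k$ is the intersection of exactly $k$ facets $F_1,\dots,F_k$, and the assignment $I\mapsto\bigcap_{i\in I}F_i$ is an isomorphism of the Boolean lattice $2^{[k]}$ onto $[g,P]$ carrying a $j$-element subset to a face of codimension $j$. Hence $N(g)$ equals the number of maximal chains of $2^{[k]}$, i.e. the number of orderings of the $k$ atoms, namely $k!$. Combining the two steps, $d^k P=k!\sum_{g\in\mathcal F_k(P)}g$, so $\tfrac{t^k}{k!}d^kP=t^k\sum_{g\in\mathcal F_k(P)}g$, and summing over $k$ reproduces $F_P(t)$ exactly.

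The main obstacle is precisely this second step: one must justify the Boolean structure of the upper interval $[g,P]$, equivalently that the polar dual of $P$ is simplicial, which is the defining feature distinguishing simple polytopes from general ones. For a non-simple polytope the interval $[g,P]$ need not be Boolean, so the multiplicities $N(g)$ fail to be uniformly $k!$; this is exactly what underlies the concluding assertion that the identity $F_P(t)=e^{td}P$ can be used to \emph{characterise} the simple polytopes among all polytopes.
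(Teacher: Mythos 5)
Your argument is correct and is exactly the standard expansion of the paper's one-line justification (``the proof easily follows from the simplicity of the polytope''): iterating $d$ counts saturated chains, and simplicity makes each upper interval $[g,P]$ Boolean, so every codimension-$k$ face is hit with multiplicity $k!$, cancelling the $1/k!$ in the exponential. The only minor caveat is that your closing remark on the converse (the characterisation of simple polytopes) is a sketch rather than a proof, since in $\mathfrak P$ one compares sums over combinatorial equivalence classes and must rule out the multiplicities averaging to $k!$; but the paper does not prove that direction either.
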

The proof easily follows from the simplicity of the polytope. In particular, for simple polytope $P$ the  motivic interior  can be written as
$
P^\circ=e^{-d}P.
$

Following \cite{B-2008}, consider now the generating functions of $F_P(t)$ for the families of associahedra and permutohedra
\beq{asper}
  \begin{aligned}
\mathcal A(t,x):=\sum_{n=1}^\infty F_{\mathfrak a_n}(t)x^{n+1}=\mathfrak a_1 x^2+ (\mathfrak a_2+ 2\mathfrak a_1 t) x^3+(\mathfrak a_3+ 5 \mathfrak a_2 t+ 5 \mathfrak a_1 t^2) x^4 +\dots, \\
 \mathcal P(t,x):=\sum_{n=1}^\infty F_{\pi_n}(t)\frac{x^{n}}{n!}= \pi_1 x +(\pi_2 +2 \pi_1 t)\frac{x^{2}}{2!}+(\pi_3+6\pi_2 t + 6\pi_1 t^2)\frac{x^{3}}{3!}+\dots.
  \end{aligned}
\eeq

 Here both $\mathfrak a_1$ and $\pi_1$ are points and thus are units in the polytopal ring, but for our purposes we will keep track of them considering them as independent variables. In particular, we define the corresponding {\it weighted motivic interiors}  as
\beq{motivic_weight}
  \begin{aligned}
\mathfrak a_n^\circ:=F_{\mathfrak a_n}(-\mathfrak a_1)=\sum_{f\in \mathcal F(\mathfrak a_n)}  (-\mathfrak a_1)^{\text{codim} f} f, \\
\pi_n^\circ:=F_{\pi_n}(-\pi_1)=\sum_{f\in \mathcal F(\pi_n)}  (-\pi_1)^{\text{codim} f} f.
  \end{aligned}
  \eeq
For example, for $n=3$ we have 
$
\mathfrak a_3^\circ=\mathfrak a_3- 5 \mathfrak a_2 \mathfrak a_1+ 5 \mathfrak a_1 ^3,
\,\, \pi_3^\circ=\pi_3-6\pi_2 \pi_1 + 6\pi_1^3.
$
The corresponding motivic interiors (\ref{motivic}) can be found from the weighted motivic interiors by setting $\mathfrak a_1=\pi_1=1.$

Denote by $$u_t(t,x):=\frac{\partial}{\partial t} u(t,x), \,\, u_x(t,x):=\frac{\partial}{\partial x} u(t,x)$$ the corresponding partial derivatives of the function $u(t,x).$

\begin{Theorem} [\cite{B-2008, BP}]
The generating functions (\ref{asper}) satisfy the partial differential equations in $\mathfrak P[t][[x]]$
\beq{as}
\mathfrak a_1 \mathcal A_t=\mathcal A \mathcal A_x, \quad \mathcal A(0,x)=A(x):=\sum_{n=1}^\infty \mathfrak a_{n} x^{n+1},
\eeq
\beq{per}
\pi_1 \mathcal P_t= \mathcal P^2, \quad \mathcal P(0,x)=P(x):=\sum_{n=1}^\infty \pi_n \frac{x^{n}}{n!}.
\eeq
\end{Theorem}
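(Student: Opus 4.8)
The plan is to reduce both equations to a single combinatorial identity at $t=0$ and then to propagate it in $t$ by the exponential of the boundary operator. By Lemma~\eqref{Lemma} we have $F_{\mathfrak a_n}(t)=e^{td}\mathfrak a_n$ and $F_{\pi_n}(t)=e^{td}\pi_n$, so that, applying $e^{td}$ to the polytope coefficients,
$$
\mathcal A(t,x)=e^{td}A(x),\qquad \mathcal P(t,x)=e^{td}P(x),
$$
with $A(x)=\mathcal A(0,x)$ and $P(x)=\mathcal P(0,x)$ as in \eqref{as}--\eqref{per}. I would use three formal properties of $e^{td}$. First, $\partial_t e^{td}=d\,e^{td}=e^{td}d$, whence $\mathcal A_t=e^{td}(dA)$ and $\mathcal P_t=e^{td}(dP)$. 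Second, because $d$ satisfies the Leibnitz identity, $e^{td}$ is multiplicative — this is exactly the statement recalled in the text that the valuation map $P\mapsto F_P(t)$ is a ring homomorphism — and it commutes with the differentiation $\partial_x$ in the independent variable $x$. Third, since the points $\mathfrak a_1,\pi_1$ have no boundary, $d\mathfrak a_1=d\pi_1=0$, so $e^{td}\mathfrak a_1=\mathfrak a_1$ and $e^{td}\pi_1=\pi_1$.

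First I would prove the two base identities in $\mathfrak P[[x]]$
$$
\mathfrak a_1\,dA=A\,A_x,\qquad \pi_1\,dP=P^2,
$$
in which the point-generators act as the unit of $\mathfrak P$. Each is just the boundary formula \eqref{boundary} read off coefficientwise. For the associahedron $\mathfrak a_n$ (path on $n$ vertices) the proper connected induced subgraphs are the subintervals: an interval on $k$ vertices contributes the facet $\mathfrak a_k\times\mathfrak a_{n-k}$, and there are $n-k+1$ intervals of length $k$, so that $d\mathfrak a_n=\sum_{k=1}^{n-1}(n-k+1)\,\mathfrak a_k\mathfrak a_{n-k}$, which is precisely the coefficient of $x^{n+1}$ in $A\,A_x$. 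For the permutohedron $\pi_n$ (complete graph on $n$ vertices) every proper nonempty $S$ with $|S|=k$ is connected and yields the facet $\pi_k\times\pi_{n-k}$, there being $\binom{n}{k}$ of them, so that $d\pi_n=\sum_{k=1}^{n-1}\binom{n}{k}\pi_k\pi_{n-k}$, which is the coefficient of $x^n/n!$ in $P^2$.

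With the base identities established, the two equations follow by conjugating with $e^{td}$. Using multiplicativity, commutation with $\partial_x$, and $e^{td}\mathfrak a_1=\mathfrak a_1$,
$$
\mathfrak a_1\mathcal A_t=e^{td}(\mathfrak a_1\,dA)=e^{td}(A\,A_x)=\bigl(e^{td}A\bigr)\bigl(e^{td}A_x\bigr)=\mathcal A\,\mathcal A_x,
$$
since $e^{td}A_x=(e^{td}A)_x=\mathcal A_x$; in the same way
$$
\pi_1\mathcal P_t=e^{td}(\pi_1\,dP)=e^{td}(P^2)=\bigl(e^{td}P\bigr)^2=\mathcal P^2 .
$$
The initial conditions $\mathcal A(0,x)=A(x)$ and $\mathcal P(0,x)=P(x)$ are immediate from $e^{0\cdot d}=\mathrm{id}$.

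I expect the real content to be concentrated in the base identities: getting the two facet counts right ($n-k+1$ subintervals of a path, $\binom{n}{k}$ subsets of a complete graph) and recognising them as the coefficients of the bilinear expressions $A\,A_x$ and $P^2$, together with the bookkeeping of the point-generators $\mathfrak a_1,\pi_1$, whose role as units is exactly what the prefactors in \eqref{as}--\eqref{per} record. By contrast the propagation in $t$ is purely formal — it is nothing but the fact that the exponential of a derivation is an algebra homomorphism commuting with $\partial_x$ — so no separate induction over the codimension of faces is needed, and the circumstance that $d^2\neq0$ causes no difficulty.
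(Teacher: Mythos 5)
Your proposal is correct and follows essentially the same route as the paper: the boundary formula (\ref{boundary}) specialised to paths and complete graphs gives $d\mathfrak a_n=\sum_{i=1}^{n-1}(i+1)\,\mathfrak a_i\times\mathfrak a_{n-i}$ and $d\pi_n=\sum_{i=1}^{n-1}\binom{n}{i}\pi_i\times\pi_{n-i}$, and these recursions are converted into the PDEs via $F_P(t)=e^{td}P$ from Lemma 3.1. Your explicit remarks that $e^{td}$ is multiplicative (since $d$ is a derivation), commutes with $\partial_x$, and fixes the points $\mathfrak a_1,\pi_1$ merely spell out the step the paper compresses into ``which together with Lemma leads to the differential equations,'' so nothing further is needed.
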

Indeed, the boundary formula (\ref{boundary}) in this particular case gives
\beq{asper1}
d \mathfrak a_n=\sum_{i=1}^{n-1} (i+1) \mathfrak a_i  \times \mathfrak a_{n-i}, \quad d \pi_n=\sum_{i=1}^{n-1}  {n \choose i}\pi_i \times \pi_{n-i},
\eeq
which together with Lemma leads to the differential equations (\ref{as}), (\ref{per}). 


The partial differential equation
\beq{Hopf}
u_t=u u_x
\eeq
is called {\it Hopf equation} (or {\it inviscid Burgers' equation}) and used as the simplest model to describe the breaking of waves phenomenon.

It is well-known that its solution $u(t,x)$ with initial value $u(0,x)=f(x)$  can be given implicitly by the formula
\beq{sol}
u=f(x+ut),
\eeq
which can be easily checked directly. 
Applying this now for the equation (\ref{as}), we have the identity
\beq{solas}
\mathcal A(\mathfrak a_1 t,x)=A(x+\mathcal A(\mathfrak a_1 t,x) t).
\eeq
Substituting here $t=-1$ we see that the generating function
\beq{asso}
A^\circ(x):=\mathcal A(-\mathfrak a_1,x)=\sum_{n=1}^\infty \mathfrak a^\circ_nx^{n+1}
\eeq
satisfies the relation
\beq{solas}
A^\circ(x)=A(x-A^\circ(x)).
\eeq
Consider the series $\xi=x-A^\circ(x)$, then $$x=\xi+A^\circ(x)=\xi+A(\xi).$$ 

Thus we have proved the following theorem, which is an interpretation of the result of Loday \cite{Loday} (see also \cite{AA-2023}).

\begin{Theorem} The generating power series of associahedra and their  weighted motivic interiors
\beq{associ}
\alpha(x)=x+\sum_{n=1}^\infty \mathfrak a_{n}x^{n+1}, \quad \beta(x)=x-\sum_{n=1}^\infty \mathfrak a^\circ_n x^{n+1} 
\eeq
are compositionally inverse to each other in the polytopal ring $\mathfrak P[[x]]$:
$$
\alpha(\beta(x))\equiv x, \quad \beta(\alpha(x)) \equiv x.
$$
\end{Theorem}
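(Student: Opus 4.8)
The plan is to read the inversion off directly from the implicit solution of the Hopf equation \mref{Hopf} governing the generating function $\mathcal{A}(t,x)$, specialised at a single value of the time variable. Essentially all of the analytic input is already in place: the earlier Theorem (\cite{B-2008, BP}) supplies the equation \mref{as}, namely $\mathfrak{a}_1 \mathcal{A}_t = \mathcal{A}\mathcal{A}_x$ with $\mathcal{A}(0,x)=A(x)$, and the implicit formula \mref{sol} solves the Hopf equation. So the argument is a short chain of substitutions in $\mathfrak{P}[t][[x]]$, and the only real point of care is the legitimacy of those substitutions in this formal, polytope-valued setting.

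First I would rescale time by $\mathfrak{a}_1$: setting $w(t,x):=\mathcal{A}(\mathfrak{a}_1 t,x)$ one computes $w_t = \mathfrak{a}_1\mathcal{A}_t(\mathfrak{a}_1 t,x) = \mathcal{A}(\mathfrak{a}_1 t,x)\,\mathcal{A}_x(\mathfrak{a}_1 t,x) = w\,w_x$ by \mref{as}, with $w(0,x)=A(x)$, so that \mref{sol} gives $w = A(x+wt)$, i.e. $\mathcal{A}(\mathfrak{a}_1 t,x)=A\bigl(x+\mathcal{A}(\mathfrak{a}_1 t,x)\,t\bigr)$. The crucial step is the specialisation $t=-1$. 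Because each coefficient $F_{\mathfrak{a}_n}(t)$ is, by \mref{F}, a \emph{polynomial} in $t$ of degree $n-1$ (the dimension of $\mathfrak{a}_n$), the evaluation $t\mapsto -1$, which sends $\mathcal{A}(\mathfrak{a}_1 t,x)$ to $\mathcal{A}(-\mathfrak{a}_1,x)=A^{\circ}(x)$, is a well-defined termwise operation and yields the identity $A^{\circ}(x)=A\bigl(x-A^{\circ}(x)\bigr)$.

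Next I would package the two series of the theorem. Writing $\xi:=x-A^{\circ}(x)=\beta(x)$, the identity just obtained reads $A^{\circ}(x)=A(\xi)$, whence $x=\xi+A^{\circ}(x)=\xi+A(\xi)=\alpha(\xi)=\alpha(\beta(x))$, proving $\alpha(\beta(x))\equiv x$. Here one must confirm that every composition is meaningful as a formal power series: both $A(x)$ and $A^{\circ}(x)$ begin at order $x^{2}$, so $A\circ(\mathrm{id}-A^{\circ})$ makes sense, and $\alpha,\beta$ are of the form $x+O(x^{2})$ with invertible linear part. For the reverse identity $\beta(\alpha(x))\equiv x$ I would invoke the group structure of composition: over any commutative ring the power series of the form $x+O(x^{2})$ form a group under substitution, in which a right inverse is automatically a two-sided inverse; hence $\alpha\circ\beta=\mathrm{id}$ forces $\beta\circ\alpha=\mathrm{id}$.

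The main obstacle is foundational rather than computational. One must justify that the classical solution \mref{sol} of the Hopf equation, normally stated for analytic functions of real or complex variables, remains valid when the coefficients live in the graded commutative ring $\mathfrak{P}$ with $\mathfrak{a}_1$ kept as a free invertible variable. Since \mref{sol} is verified purely by formal differentiation, using only the chain and Leibniz rules that hold in $\mathfrak{P}[t][[x]]$, this transfer requires no genuinely new estimate; and the substitution $t=-1$ is unproblematic precisely because the $t$-dependence is polynomial. Once these formal checks are noted, the theorem is an immediate consequence of the relation $x=\xi+A(\xi)$ with $\xi=\beta(x)$.
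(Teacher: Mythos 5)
Your argument coincides with the paper's own proof: the paper likewise rescales time to get $\mathcal A(\mathfrak a_1 t,x)=A(x+\mathcal A(\mathfrak a_1 t,x)\,t)$ from the implicit Hopf solution \mref{sol}, substitutes $t=-1$ to obtain $A^\circ(x)=A(x-A^\circ(x))$, and sets $\xi=x-A^\circ(x)$ to conclude $x=\xi+A(\xi)=\alpha(\beta(x))$. Your additional remarks on the polynomiality in $t$ and the group of formal power series under composition only make explicit checks the paper leaves implicit.
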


Note that since by Theorem 2.1 and Proposition 2.2 the associahedra are algebraically independent in $\mathfrak P$, this implies the following {\it universal inversion formulae}:
the coefficients of the formal series $\alpha(x)=x+\sum_{n\geq 1}a_n x^{n+1}$ and its compositional inverse $\beta(x)=x+\sum_{n\geq 1}b_n x^{n+1}$ are related by the following formulae, 
expressing each other as certain polynomials with integer coefficients:
$$
b_1=-a_1, \quad b_2=-a_2+2a_1^2, \quad b_3=-a_3+5a_1a_2-5a_1^3,$$
$$
b_4=-a_4+6a_1a_3+3a_2^2-21a_1^2a_2+14a_1^4,
$$
in agreement with the formulas for the weighted motivic interiors of the associahedra
$$
\mathfrak a^\circ_1=\mathfrak a_1, \mathfrak a^\circ_2=\mathfrak a_2-2\mathfrak a_1^2, \quad \mathfrak a^\circ_3=\mathfrak a_3-5\mathfrak a_1\mathfrak a_2+5\mathfrak a_1^3,$$
$$
\mathfrak a^\circ_4=\mathfrak a_4-6\mathfrak a_1\mathfrak a_3-3\mathfrak a_2^2+21\mathfrak a_1^2\mathfrak a_2-14\mathfrak a_1^4.
$$

Note that the faces of $\mathfrak a_n$ of codimension $d$ are in one-to-one correspondence with
dissections of a based $(n+3)$-gon by $d$ non-intersecting diagonals (see \cite{GKZ, DR}). Alternatively, the faces of $\mathfrak a_n$ can be labelled by non-isomorphic planar rooted trees with $n+1$ leaves \cite{Devadoss}, which can be used to rewrite the inversion formulas in these terms (see \cite{Stanley}).

The corresponding polynomials can also be expressed in terms of the {\it partial Bell polynomials} $B_{n,k}$ (see e.g. \cite{Comtet}, Section 3.8)
or, more explicitly, as
\beq{invb}
b_n=\frac{1}{(n+1)!} \sum_{k_1+2k_2+\dots=n, \, k_i\geq 0} (-1)^k\frac{(n+k)!} {k_1!k_2!\dots}a_1^{k_1}a_2^{k_2}\dots,
\eeq
where $k=k_1+k_2+\dots$ (see e.g. \cite{Gessel}).

%
%

Similarly, the corresponding permutohedral differential equation (\ref{per})
 has the explicit solution
\beq{solper}
\mathcal P(\pi_1 t,x)=\frac{\pi(x)}{1-t \pi(x)}.
\eeq

Substituting here $t=-1$ we have the relation
$
\mathcal P(-\pi_1,x)=\frac{\pi(x)}{1+\pi(x)}.
$
This implies that
$
1-\mathcal P(-\pi_1,x)=\frac{1}{1+\pi(x)}
$
and the following result, which is an interpretation of the known result from \cite{AA-2023}.

\begin{Theorem}  The exponential generating series of permutohedra and their weighted motivic interiors
\beq{permut_inv}
P(x)=1+\sum_{n=1}^\infty \pi_{n} \frac{x^{n}}{n!}, \quad P^\circ(x)=1-\sum_{n=1}^\infty \pi_{n}^\circ \frac{x^{n}}{n!}
\eeq
is multiplicatively inverse to each other
 in the polytopal ring $\mathfrak P[[x]]$:
$$
P(x) P^\circ(x)\equiv 1.
$$

\end{Theorem}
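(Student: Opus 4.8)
The plan is to parallel the associahedral argument of Theorem 3.2: integrate the permutohedral equation \mref{per} explicitly in $t$ and then specialise to $t=-1$. Regarding $x$ as a parameter, \mref{per} is the \emph{separable} first-order equation $\pi_1\mathcal P_t=\mathcal P^2$, and I would first confirm that the stated solution \mref{solper},
$$\mathcal P(\pi_1 t,x)=\frac{\pi(x)}{1-t\,\pi(x)},\qquad \pi(x):=\sum_{n\geq 1}\pi_n\frac{x^n}{n!},$$
really solves it. Writing $g(t)=\mathcal P(\pi_1 t,x)$ and differentiating by the chain rule gives $g'(t)=\pi(x)^2/(1-t\,\pi(x))^2=g(t)^2=\pi_1\mathcal P_t$, while $g(0)=\pi(x)=\mathcal P(0,x)$ matches the initial datum in \mref{per}. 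Since $\pi(x)$ has no constant term, $1-t\,\pi(x)$ is invertible in $\mathfrak P[t][[x]]$ and the expansion $g(t)=\sum_{k\geq 0}t^k\pi(x)^{k+1}$ is a well-defined element there (its $x^n$-coefficient is a polynomial in $t$ of degree $\leq n-1$, matching $F_{\pi_n}(t)$). Uniqueness is automatic because \mref{per} gives a triangular recursion $\pi_1 c_n'(t)=\sum_{i+j=n,\,i,j\geq1}c_i(t)c_j(t)$ for the $x^n$-coefficients $c_n(t)$, determining every $c_n$ from its initial value $c_n(0)=\pi_n/n!$.

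Next I would specialise to $t=-1$. By the definition \mref{motivic_weight} of the weighted motivic interior one has $F_{\pi_n}(-\pi_1)=\pi_n^\circ$, so
$$\mathcal P(-\pi_1,x)=\sum_{n\geq 1}F_{\pi_n}(-\pi_1)\frac{x^n}{n!}=\sum_{n\geq 1}\pi_n^\circ\frac{x^n}{n!}$$
is exactly the generating series of the $\pi_n^\circ$. Putting $t=-1$ in the explicit solution yields $\mathcal P(-\pi_1,x)=\pi(x)/(1+\pi(x))$, and hence
$$1-\mathcal P(-\pi_1,x)=1-\frac{\pi(x)}{1+\pi(x)}=\frac{1}{1+\pi(x)}.$$

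Finally I would read off the conclusion from the definitions \mref{permut_inv}. There $P(x)=1+\pi(x)$ and $P^\circ(x)=1-\sum_{n\geq1}\pi_n^\circ x^n/n!=1-\mathcal P(-\pi_1,x)$, so the last display reads $P^\circ(x)=1/P(x)$, i.e. $P(x)P^\circ(x)\equiv 1$ in $\mathfrak P[[x]]$; all reciprocals are legitimate since $P(x)=1+\pi(x)$ has invertible constant term $1$. I expect no serious obstacle: the only real content is the exact integrability of \mref{per}, which holds precisely because its right-hand side is $\mathcal P^2$, so the equation is a Bernoulli/Riccati equation solved in closed form. The remaining subtlety is purely bookkeeping — keeping the signs and the additive constants in the two conventions of \mref{permut_inv} aligned, so that the elementary identity $1-\frac{\pi(x)}{1+\pi(x)}=\frac{1}{1+\pi(x)}$ translates cleanly into the stated product formula.
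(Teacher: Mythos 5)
Your proposal is correct and follows essentially the same route as the paper: integrate the Riccati-type equation \mref{per} to the closed form \mref{solper}, set $t=-1$ to identify $\mathcal P(-\pi_1,x)$ with the series of weighted motivic interiors, and read off $P^\circ(x)=1-\pi(x)/(1+\pi(x))=1/P(x)$. The extra verifications you supply (the chain-rule check of the solution, the triangular recursion giving uniqueness, and the invertibility of $1-t\,\pi(x)$ in $\mathfrak P[t][[x]]$) are details the paper leaves implicit but are entirely consistent with its argument.
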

%
%
 Again since permotohedra are algebraically independent in $\mathfrak P,$ this implies the {\it universal multiplicative inversion formulae} for the formal power series. Namely, the coefficients of the series $p(x)=1+\sum_{n=1}^\infty p_{n} \frac{x^{n}}{n!}$ and its multiplicative inverse $q(x)=1+\sum_{n=1}^\infty q_{n} \frac{x^{n}}{n!}$ are related by the polynomial  formulas with integer coefficients: $q_1=-p_1,\, q_2=-p_2+2p_1^2,$
$$
q_3=-p_3+6p_1p_2-6p_1^3, \,\,\, q_4=-p_4+8p_1p_3+6p_2^2-36p_1^2p_2+24p_1^4,
$$
in agreement with the fact that $\pi_3$ is hexagon and $\pi_4$ has 8 hexagonal and 6 quadrilateral faces, 36 edges and 24 vertices (see Fig. 2). 
This is also in a good agreement with the formulas for the weighted motivic interiors of the permutohedra: $\pi^\circ_1=-p_1,\, \pi^\circ_2=-p_2+2p_1^2,$
\beq{multinv}
 \pi^\circ_3= \pi_3-6 \pi_1 \pi_2+6 \pi_1^3, \,\,\,  \pi^\circ_4= \pi_4-8 \pi_1 \pi_3-6 \pi_2^2+36 \pi_1^2 \pi_2-24 \pi_1^4.
\eeq

\section{Inversion formula and the Deligne-Mumford moduli spaces}

There is an interesting link with the moduli space $\bar M_{n}=\bar M_{0,n}$ of stable genus zero curves with $n$ ordered marked points. Its real version $\bar M_{n+1}(\mathbb R)$ is known to be tessellated by $n!/2$ copies of associahedron $K_n$ (see \cite{Kap,Devadoss}). This explains the result of McMullen, who interpreted the compositional inversion formulas in terms of the corresponding real strata  \cite{McMullen}. 

It is interesting that the geometry of the complex version $\bar M_{n+1}=\bar M_{n+1}(\mathbb C)$ allows also to describe the compositional inversion of the {\it exponential} power series. More precisely, McMullen proved that the compositional inverse of the formal series
$
f(x)=x-\sum_{n\geq 1}a_n\frac{x^{n+1}}{(n+1)!}
$
is given by 
\beq{MMinv}
g(x)=x+\sum_{n\geq 1}b_n\frac{x^{n+1}}{(n+1)!}, \quad b_n=\sum N_{n_1,\dots, n_s}a_{n_1}\dots a_{n_s},
\eeq
where  $N_{n_1,\dots, n_s}$ is the number of strata $S \subset  \bar M_{n}(\mathbb C)$ isomorphic to $$M_{n_1}(\mathbb C)\times \dots \times  M_{n_s}(\mathbb C)$$ (see Theorem 1 in \cite{McMullen}).  The proof is based on the well-known bijection between the strata and the marked stable rooted trees, known also as modular graphs \cite{LZ}.

One can combine this with our result \cite{BV-2020} claiming that the generating functions of the cobordism classes of the complex projective spaces $\mathbb CP^n$ and the theta-divisors $\Theta^n$
$$
\alpha(z)=z+\sum_{n=1}^\infty[\mathbb CP^n]\frac{z^{n+1}}{n+1}, \quad \beta(z)=z+\sum_{n=1}^\infty[\Theta^n]\frac{z^{n+1}}{(n+1)!},
$$
are compositionally inverse to each other, to express the cobordism class $[\Theta^{n-1}]$ in terms of the cobordism classes $[\mathbb CP^k], \, k\leq n-1$ as follows
\beq{thetainv}
[\Theta^{n}]=\sum N_{n_1,\dots, n_s}a_{n_1}\dots a_{n_s}, \quad a_k=-k![\mathbb CP^{k}].
\eeq

One more remarkable fact here is due to Getzler \cite{Getzler}. Recall that $\bar M_{n}$ is a special (Deligne-Mumford) compactification of the moduli space $M_{n}$ of $n$ ordered distinct points on complex projective line considered up to projective equivalence. Using the framework of the operads, Getzler proved that the exponential generating functions of the corresponding (in case of $M_n$, motivic) Poincare polynomials  
$$
F(t,x):=x-\sum_{n\geq 2}P_{M_{n+1}}(t)\frac{x^n}{n!}, \quad G(t,y):=y+\sum_{n\geq 2}P_{\bar M_{n+1}}(t)\frac{y^n}{n!}
$$
are inverse to each other: $F(G(t,y),t)\equiv y,\,\, G(F(t,x),t)\equiv x.$
Since the motivic Poincare polynomial 
\beq{motP}
P_{M_n}(t)=(t^2-2)(t^2-3)\dots (t^2-n+2)
\eeq
 is known explicitly (see e.g. \cite{KLP}), this determines $P_{\bar M_n}(t).$
Substituting here $t=-1$ we have the same relation between exponential generating functions of the Euler characteristics
$$
F(x):=x-\sum_{n\geq 2}\chi(M_{n+1})\frac{x^n}{n!}, \quad G(y):=y+\sum_{n\geq 2}\chi(\bar M_{n+1})\frac{y^n}{n!}.
$$
Since $\chi(M_{n+1})=(-1)^{n}(n-2)!$ we see that $G(y)$ is the compositional inverse of the series
$
F(x)=x-\sum_{n\geq 2}\frac{(-1)^n }{n(n-1)} x^n,
$
 which allows to compute $\chi(\bar M_{n}),$ giving OEIS sequence A074059:
$2,\,7,\,34,\,213,\,1630,\,14747,\,153946,\,1821473...$


It is interesting that for the real version of Deligne-Mumford spaces $\bar M_n(\mathbb R)$ McMullen proved that the same relation $$B_n=\sum N_{n_1,\dots, n_s}A_{n_1}\dots A_{n_s}$$  describes the functional inversion $G(x)=x-\sum_{n\geq 1}B_n x^{n+1}$ of the {\it usual} power series $F(x)=x+\sum_{n\geq 1}A_n x^{n+1}$ (see Corollary 5 in \cite{McMullen}).

Over finite fields one can interpret these results within the approach of Weil and Deligne as follows.

Let $\mathbb F_p$ with $p$ prime be the finite field with $p$ elements and $M_{0,n}(\mathbb F_p), \overline{M_{0,n}(\mathbb F_p)}$ be the corresponding varieties over $\mathbb F_p.$
Let $|M_{0,n}(\mathbb F_p)|$ and $|\overline{M_{0,n}(\mathbb F_p)}|$ be the number of points in these varieties respectively.

\begin{Theorem} The generating power series of the number of points in $M_{0,n}(\mathbb F_p)$ and  $\overline{ M_{0,n}(\mathbb F_p)}$
\beq{M0nFp}
\alpha(x)=x+\sum_{n=2}^\infty |\overline{M_{0,n+1}(\mathbb F_p)}| \frac{x^n}{n!}, \quad \beta(x)=x-\sum_{n=2}^\infty |M_{0,n+1}(\mathbb F_p)| \frac{x^n}{n!} 
\eeq
are compositionally inverse to each other for all primes $p.$
\end{Theorem}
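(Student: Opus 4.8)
The plan is to reduce the statement to Getzler's inversion theorem recalled above by specialising the Poincaré-polynomial variable $t$ to $\sqrt p$, the bridge being the Weil conjectures. Concretely, I would establish for every prime $p$ the two point-count identities
$$|M_{0,n}(\mathbb F_p)| = P_{M_n}(\sqrt p), \qquad |\overline{M_{0,n}(\mathbb F_p)}| = P_{\bar M_n}(\sqrt p),$$
with $P_{M_n}, P_{\bar M_n}$ the motivic Poincaré polynomials entering Getzler's theorem, and then substitute $t=\sqrt p$ into the functional identities $F(G(t,y),t)\equiv y$ and $G(F(t,x),t)\equiv x$.

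For the open spaces the first identity is elementary: fixing three of the marked points at $0,1,\infty$ identifies $M_{0,n}$ with the configurations of $n-3$ further distinct points of $\mathbb P^1\setminus\{0,1,\infty\}$, so that $|M_{0,n}(\mathbb F_p)| = (p-2)(p-3)\cdots(p-n+2)$, which is exactly (\ref{motP}) evaluated at $t=\sqrt p$. For the compactifications the real input lies in the second identity. Since $\bar M_{0,n}$ is smooth and proper over $\mathbb Z$ (so has good reduction at every $p$) and its $\ell$-adic cohomology is of Tate type --- all odd Betti numbers vanish and Frobenius acts on $H^{2i}$ by the scalar $p^i$ --- the Grothendieck--Lefschetz trace formula together with Deligne's purity gives $|\overline{M_{0,n}(\mathbb F_p)}| = \sum_i b_{2i}\,p^i = P_{\bar M_n}(\sqrt p)$. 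I would justify the Tate-type property from the iterated-blow-up (Keel/Kapranov) description of $\bar M_{0,n}$, or equivalently from its stratification into products of open moduli $M_{0,k}$, each of which is mixed Tate by the explicit count above.

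With both identities in hand the conclusion is immediate: comparing with (\ref{M0nFp}), the series $\alpha(x)$ and $\beta(x)$ of the theorem are precisely $G(\sqrt p, x)$ and $F(\sqrt p, x)$, and since Getzler's relations $F(G(t,y),t)\equiv y$ and $G(F(t,x),t)\equiv x$ hold as identities in $t$, they persist at $t=\sqrt p$, yielding $\alpha(\beta(x))\equiv x$ and $\beta(\alpha(x))\equiv x$ for every prime $p$. I expect the main obstacle to be the purity/Tate-type step for $\bar M_{0,n}$: one must ensure that only integer powers of $p$ occur, so that the substitution $t=\sqrt p$ is unambiguous and matches the actual point count, and that this holds uniformly in $p$. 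A cleaner, self-contained alternative that sidesteps Getzler entirely is to run McMullen's stratification argument (\ref{MMinv}) directly over $\mathbb F_p$: additivity of point counts over the boundary stratification of $\bar M_{0,n}$, together with their multiplicativity on the product strata $M_{0,n_1}\times\cdots\times M_{0,n_s}$, reproduces, with the same combinatorial coefficients $N_{n_1,\dots,n_s}$, the inversion relation between the two generating series.
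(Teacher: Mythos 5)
Your proposal is correct and follows essentially the same route as the paper: both arguments reduce the statement to Getzler's inversion identity by substituting $p=t^2$ into the motivic Poincar\'e polynomials, using the elementary count $|M_{0,n}(\mathbb F_p)|=(p-2)(p-3)\cdots(p-n+2)$ for the open strata. The only difference is that where the paper cites the Amburg--Kreines--Shabat computation of $|\overline{M_{0,n}(\mathbb F_p)}|$, you rederive that point-count identity yourself from good reduction, vanishing of odd cohomology and Tate-type purity via the iterated blow-up description (or, in your alternative, from additivity of point counts over the boundary stratification) --- a valid substitute for the cited input.
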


\begin{proof} 
The number of points in $M_{0,n}(\mathbb F_p)$ is easy to compute directly:
$$
|M_{0,n}(\mathbb F_p)|=(p-2)(p-3)\dots (p-n+2), \quad n\geq 4,
$$
with $|M_{0,3}(\mathbb F_p)|=1.$ 

The number of points in $\overline{M_{0,n}(\mathbb F_p)}$ was computed by Amburg, Kreines and Shabat in \cite{AKS}, who proved, in particular, that this number coincides with the value of the Poincare polynomial $P_{\overline{M_{0,n}(\mathbb C)}}(t)$ when $p=t^2$:
$$
|\overline{M_{0,n}(\mathbb F_p)}|=P_{\overline{M_{0,n}(\mathbb C)}}(t), \quad p=t^2.
$$
Since motivic Poincare polynomial of $M_{0,n}(\mathbb C)$ is given by (\ref{motP}), we have the same relation for $M_{0,n}(\mathbb C)$:
$
|M_{0,n}(\mathbb F_p)|=P_{M_{0,n}(\mathbb C)}(t), \quad p=t^2.
$
Now the proof follows from Getzler's result.
\end{proof}

\section{Inversion formula and permutohedral varieties}

The permutohedral variety $X_\Pi^n$ is the toric variety \cite{CLS}, corresponding to the $n$-dimensional permutohedron $\pi_{n+1}$.
In particular, $X_\Pi^1=\mathbb CP^1$, $X_\Pi^2$ is the degree 6 del Pezzo surface.

Every toric variety $X^n$ is a closure of the algebraic torus $\mathbb T_{\mathbb C}^n=(\mathbb C^*)^n$, which can be defined as the {\it motivic interior of $X^n$.}

To justify this consider the {\it Grothendieck ring of complex quasi-projective varieties}  $K_0(\mathcal V_\mathbb C)$ generated by the isomorphism classes of complex quasi-projective
varieties modulo the relations $$[X] = [Y ]+[X \setminus Y ],$$ where $Y$ is a Zariski locally
closed subset of $X$, with multiplication given by the formula $[X]·[Y ] = [X \times Y]$ (see e.g. \cite{GLM-2004}).

Let $X_P$ be the smooth toric variety corresponding to a Delzant polytope $P$ and $Y_f \subset X_P$ be the toric subvariety corresponding to the face $f \in \mathcal F(P).$

The following result provides more justification for our definition of the motivic interior. 
In analogy with (\ref{motivic}) define the {\it motivic interior of $X_P$} as
\beq{motivic2}
[X_P^\circ]:=\sum_{f\in \mathcal F(P)}  (-1)^{\text{codim} f} [Y_f] \in K_0(\mathcal V_\mathcal C).
\eeq

\begin{prop} In the Grothendieck ring $K_0(\mathcal V_\mathbb C)$ we have the relation
\beq{motivicG}
\sum_{f\in \mathcal F(P)}  (-1)^{\text{codim} f} [Y_f] =[\mathbb T_{\mathbb C}^n].
\eeq
In other words, the motivic interior of toric variety $X_P$ is $[X_P^\circ]=[\mathbb T_{\mathbb C}^n].$ 
\end{prop}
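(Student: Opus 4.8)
The plan is to reduce the identity to the Euler--Poincar\'e relation for polytopes by passing to the torus-orbit decomposition of $X_P$. Recall from toric geometry (see \cite{CLS}) that the smooth toric variety $X_P$ is the disjoint union of its torus orbits $O_f$, one for each face $f \in \mathcal F(P)$, with $O_f \cong (\mathbb C^*)^{\dim f}$ and the dense orbit being the big torus $O_P = \mathbb T_{\mathbb C}^n$. Moreover each orbit closure $Y_f = \overline{O_f}$ is itself a toric variety whose orbits are exactly the $O_g$ for faces $g$ of $f$, so the scissor relations $[X]=[Y]+[X\setminus Y]$ in $K_0(\mathcal V_{\mathbb C})$ yield
\[
[Y_f] = \sum_{g \preceq f} [O_g],
\]
where $g \preceq f$ means that $g$ is a face of $f$.

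First I would substitute this expansion into the defining sum \mref{motivic2} and interchange the two summations:
\[
\sum_{f \in \mathcal F(P)} (-1)^{\text{codim} f}[Y_f] = \sum_{g \in \mathcal F(P)} [O_g]\left(\sum_{f \succeq g} (-1)^{\text{codim} f}\right).
\]
It then suffices to prove that the inner coefficient equals the Kronecker delta $\delta_{g,P}$, i.e.\ that it is $1$ for $g=P$ (where $f=P$ is the unique face above $P$, of codimension $0$) and $0$ otherwise. Granting this, the right-hand side collapses to $[O_P]=[\mathbb T_{\mathbb C}^n]$, which is the assertion.

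The core of the proof is thus the vanishing of $\sum_{f \succeq g}(-1)^{\text{codim} f}$ for every $g \neq P$. Here I would use that the faces of $P$ containing $g$ are, via the dimension shift $\dim_P f = \dim g + 1 + \dim h$, in codimension-preserving bijection with the faces $h$ of the face figure $P/g$ (including its empty face, which matches $f=g$), a polytope of dimension $m = n-\dim g-1 \ge 0$. Under this identification the inner sum becomes $\sum_{h}(-1)^{\text{codim} h}$ over all faces of $P/g$, which up to the global sign $(-1)^m$ is the augmented Euler relation $\sum_{i=-1}^{m}(-1)^i f_i = 0$ for a nonempty polytope; for $g=P$ the face figure is empty and the lone term $f=P$ contributes $1$. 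I expect the only delicate point to be setting up this face-figure reindexing with correct bookkeeping of the empty face --- equivalently, invoking the fact that the face lattice of a polytope is an Eulerian poset --- after which the computation is immediate.
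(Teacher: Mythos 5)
Your proof is correct. Note that the paper itself does not write out an argument for this proposition: it only refers to Proposition-definition 6 of Pukhlikov--Khovanskii and Theorem 3.2.4 of Nicaise, which prove the analogous identity at the level of characteristic functions of faces and relative interiors of a polytope, and asserts that this ``reformulates'' into toric geometry. Your argument supplies precisely the details that this outsources: the orbit--cone correspondence turns faces into orbit closures and relative interiors into torus orbits; the scissor relations convert the stratification $Y_f=\bigsqcup_{g\preceq f}O_g$ into $[Y_f]=\sum_{g\preceq f}[O_g]$ in $K_0(\mathcal V_{\mathbb C})$; and the inner alternating sum over the interval $[g,P]$ vanishes for every proper face $g$ because the face lattice of a polytope is Eulerian --- equivalently, by the augmented Euler relation for the face figure $P/g$, with the empty face of $P/g$ correctly matched to $f=g$ and with the observation that $\mathcal F(P)$ as defined in the paper contains $P$ itself but not the empty face. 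The combinatorial core (collapse of the alternating sum via the Euler relation) is exactly the content of the cited Pukhlikov--Khovanskii result, so in spirit the route is the same, but your version is self-contained and, since the orbit stratification holds for arbitrary toric varieties, it does not actually need the smoothness/Delzant hypothesis under which the proposition is stated.
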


The proof follows from the Proposition-definition 6 in \cite{PukhKhov} reformulated in terms of toric geometry \cite{CLS} (see also Theorem 3.2.4 in \cite{Nicaise}).

Note that similar relation holds for all convex polytopes if we extend it to the framework of the toric topology \cite{BP}.

This implies that Theorem 3.5 can be reformulated as follows.

\begin{Theorem} The exponential generating series of permutohedral varieties and algebraic tori
\beq{permut_motiv}
\pi(x):=1+\sum_{n=1}^\infty [X_\Pi^n] \frac{x^{n}}{n!}, \quad \tau(x):=1-\sum_{n=1}^\infty [\mathbb T_{\mathbb C}^{n-1}] \frac{x^{n}}{n!}
\eeq
are multiplicatively inverse to each other in the ring $K_0(\mathcal V_\mathbb C)[[x]].$
\end{Theorem}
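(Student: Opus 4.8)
The plan is to obtain this statement by pushing the multiplicative inversion identity for permutohedra (Theorem 3.5) through the \emph{motivic realization} map that replaces a permutohedron by the class of its toric variety. First I would set up this map carefully. Since $[X_P]$ depends on the lattice geometry of $P$ and not merely on its combinatorial type, it cannot be defined on all of $\mathfrak P$; instead I would work on the subring $R\subset\mathfrak P$ generated by the permutohedra $\pi_1,\pi_2,\dots$. By Theorem 2.1 and Proposition 2.2 these are algebraically independent, so $R$ is a polynomial ring and there is a well-defined ring homomorphism $\phi\colon R\to K_0(\mathcal V_{\mathbb C})$ with $\phi(\pi_n)=[X_{\pi_n}]$, the class of the permutohedral variety of $\pi_n$. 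The essential compatibility is that $\phi$ is multiplicative in the correct sense: the toric variety of a product of polytopes is the product of the toric varieties, so $\phi(P_1P_2)=[X_{P_1}][X_{P_2}]=[X_{P_1\times P_2}]$, which matches the product in the Grothendieck ring.

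The crucial step is to identify the image of the weighted motivic interior $\pi_n^\circ=F_{\pi_n}(-\pi_1)=\sum_{f\in\mathcal F(\pi_n)}(-\pi_1)^{\operatorname{codim}f}f$. Here I would use that $\pi_1$ is a point, so its permutohedral variety is a point and $\phi(\pi_1)=1$; thus under $\phi$ each weight $(-\pi_1)$ collapses to the sign $(-1)$. Since every face $f$ of a permutohedron is combinatorially a product of smaller permutohedra, $f$ lies in $R$, and the toric subvariety $Y_f\subset X_{\pi_n}$ attached to $f$ is isomorphic to the toric variety of $f$ as a polytope, so $[Y_f]=\phi(f)$. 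Applying $\phi$ term by term then gives $\phi(\pi_n^\circ)=\sum_f(-1)^{\operatorname{codim}f}[Y_f]$, which by the preceding Proposition equals the class $[\mathbb T_{\mathbb C}^{n-1}]$ of the open torus.

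With these two computations the conclusion is formal. Extending $\phi$ coefficientwise to a homomorphism $R[[x]]\to K_0(\mathcal V_{\mathbb C})[[x]]$ and applying it to the identity $P(x)P^\circ(x)\equiv1$ of Theorem 3.5, multiplicativity yields $\phi(P(x))\,\phi(P^\circ(x))=\phi(1)=1$. By the first two paragraphs $\phi(P(x))=\pi(x)$ and $\phi(P^\circ(x))=\tau(x)$, so this is precisely $\pi(x)\tau(x)\equiv1$.

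I expect the real work to lie entirely in justifying the inputs to $\phi$, not in any series manipulation. The two facts that must be established are, first, that $\phi$ is genuinely a ring homomorphism compatible with the toric picture --- products of polytopes realize products of varieties and, most importantly, each face contributes $[Y_f]=\phi(f)$, which relies on the permutohedra being Delzant and on their faces being products of permutohedra --- and second, the relative-interior identity of the preceding Proposition (the toric form of the result of \cite{PukhKhov}, cf. \cite{Nicaise}) giving $\sum_f(-1)^{\operatorname{codim}f}[Y_f]=[\mathbb T_{\mathbb C}^{n-1}]$. Once these are granted, the only remaining point is the dimension bookkeeping $X_{\pi_n}=X_\Pi^{n-1}$ that aligns the indices of the two generating series.
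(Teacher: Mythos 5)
Your proposal is correct and follows essentially the same route as the paper, which obtains the theorem directly by combining the permutohedral inversion identity of Theorem 3.5 with the preceding Proposition via the realization $\pi_n\mapsto [X_{\pi_n}]$; your extra care in constructing the homomorphism $\phi$ on the polynomial subring generated by the (algebraically independent) permutohedra only makes explicit what the paper leaves implicit. Your closing remark on the bookkeeping $X_{\pi_n}=X_\Pi^{n-1}$ is well taken: carried out consistently it yields $\pi(x)=1+\sum_{n\geq 1}[X_\Pi^{n-1}]\frac{x^n}{n!}$, which is the indexing actually used later in the proof of the Eulerian-polynomial corollary.
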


{\it Motivic Poincare polynomial} $P_Z(t), \, Z \in K_0(\mathcal V_\mathbb C)$ defines the ring homomorphism $K_0(\mathcal V_\mathbb C) \to \mathbb Z[t],$
where for the projective varieties $X,Y\subseteq X$ by definition $$P_{[X\setminus Y]}(t)=P_X(t)-P_Y(t),$$ where $P_X(t)$ is the usual Poincare polynomial.
In particular, since $\mathbb C^*$ is the complement of two points in $\mathbb CP^1$, the motivic Poincare polynomial
$P_{[\mathbb C^*]}(t)=P_{\mathbb CP^1}(t)-2=(t^2+1)-2=t^2-1,$ and thus
$$
P_{\mathbb T_{\mathbb C}^n}(t)=(t^2-1)^n.
$$

As a  corollary we have the following well-known fact (see e.g. \cite{PRW}) relating permutohedral varieties with the classical {\it Eulerian polynomials} \cite{Carlitz}.
These polynomials were introduced by Euler in 1755 by the relation
$$
\sum_{k=1}^\infty k^nt^n=\frac{tA_n(t)}{(1-t)^{n+1}}.
$$
They have the generating function
\beq{genep}
\sum_{n\geq 0}A_n(s)\frac{x^n}{n!}=\frac{s-1}{s-e^{(s-1)x}}
\eeq
and can be computed recursively by
$$
A_{n+1}(t)=[t(1-t)\frac{d}{dt} +nt+1]A_n(t), \quad A_0=A_1=1:
$$
$$A_1=1, \,\, A_2=s+1,\,\, A_3=s^2+4s+1,\,\,
A_4=s^3+11s^2+11s+1,$$
$$A_5=s^4+26s^3+66s^2+26s+1, \,\, A_6=s^5+57s^4+302s^3+302s^2+57s+1.
$$
Their coefficients are known as Eulerian numbers and have natural combinatorial interpretations (see \cite{Stanley}).

\begin{Corollary} The Poincare polynomial of permutohedral varieties is
\beq{poinper}
P_{X_\Pi^n}(t)=A_n(t^2),
\eeq
where $A_n(s)$ are the Eulerian polynomials.
\end{Corollary}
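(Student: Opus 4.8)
The plan is to transport the multiplicative inversion of the preceding Theorem (relation (\ref{permut_motiv})) along the motivic Poincaré polynomial homomorphism and to identify the resulting exponential generating function with the one for the Eulerian polynomials in (\ref{genep}).

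First I would use that $P_Z(t)\colon K_0(\mathcal V_\mathbb C)\to\mathbb Z[t]$ is a ring homomorphism, so that it extends coefficientwise to a homomorphism $K_0(\mathcal V_\mathbb C)[[x]]\to\mathbb Z[t][[x]]$ which preserves products of power series. Applying it to $\pi(x)\tau(x)\equiv1$ shows that the images
\[
\hat\pi(x)=1+\sum_{n=1}^\infty P_{X_\Pi^n}(t)\frac{x^n}{n!},\qquad
\hat\tau(x)=1-\sum_{n=1}^\infty P_{\mathbb T_\mathbb C^{n-1}}(t)\frac{x^n}{n!}
\]
are again multiplicatively inverse, $\hat\pi(x)\hat\tau(x)\equiv1$, now in $\mathbb Z[t][[x]]$.

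Next I would put $s=t^2$ and insert the value $P_{\mathbb T_\mathbb C^{n}}(t)=(t^2-1)^n=(s-1)^n$ recorded above. Summing the exponential series (the division by $s-1$ being exact at the level of formal power series, or performed in $\mathbb Q(t)[[x]]$ if one prefers) gives the closed form
\[
\hat\tau(x)=1-\sum_{n=1}^\infty(s-1)^{n-1}\frac{x^n}{n!}=1-\frac{e^{(s-1)x}-1}{s-1}=\frac{s-e^{(s-1)x}}{s-1}.
\]
Inverting yields
\[
\hat\pi(x)=\frac{s-1}{s-e^{(s-1)x}},
\]
which is precisely the generating function (\ref{genep}) of the Eulerian polynomials evaluated at $s=t^2$. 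Comparing coefficients of $x^n/n!$, and using $A_0(s)=1$ to match the constant terms, I would conclude $P_{X_\Pi^n}(t)=A_n(t^2)$, which is (\ref{poinper}).

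The argument is a short computation once the preceding Theorem is granted, so I do not expect a genuine obstacle. The only point deserving care is the legitimacy of the two formal steps: that the Poincaré homomorphism may be applied termwise to the generating series and respects multiplication in $K_0(\mathcal V_\mathbb C)[[x]]$, and that inversion of a power series with invertible constant term is preserved. Both are automatic because $P_Z(t)$ is a genuine ring homomorphism fixing the unit and both $\pi(x)$ and $\tau(x)$ have constant term $1$; the summation and inversion can, if desired, be carried out over $\mathbb Q(t)[[x]]$, the final coefficients landing back in $\mathbb Z[t]$.
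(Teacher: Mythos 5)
Your proof is correct and follows essentially the same route as the paper: apply the motivic Poincar\'e polynomial homomorphism to the multiplicative inversion relation of the preceding theorem, sum the series $\sum (t^2-1)^n x^{n+1}/(n+1)!$ in closed form, invert, and compare with the Eulerian generating function (\ref{genep}). The only difference is that you make explicit the (automatic) compatibility of the homomorphism with products and with inversion of power series having constant term $1$, which the paper takes for granted.
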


\begin{proof}
The generating function of the motivic Poincare polynomials of the algebraic tori can be computed explicitly as
$$
\sum_{n=0}^\infty P_{\mathbb T_{\mathbb C}^n}(t)\frac{x^{n+1}}{(n+1)!}=\sum_{n=0}^\infty (t^2-1)^n\frac{x^{n+1}}{(n+1)!}=\frac{e^{(t^2-1)x}-1}{t^2-1}.
$$
This means that
$$1-\sum_{n=0}^\infty P_{\mathbb T_{\mathbb C}^n}(t)\frac{x^{n+1}}{(n+1)!}==\frac{t^2-e^{(t^2-1)x}}{t^2-1},$$
so from Theorem 5.1 we have
$$
1+\sum_{n=0}^\infty P_{X_\Pi^n}(t) \frac{x^{n+1}}{(n+1)!}=\frac{t^2-1}{t^2-e^{(t^2-1)x}}.
$$
Comparing this with (\ref{genep}), we have the claim.
\end{proof}

{\bf Remark.} It is interesting to mention that the permotuhedral variety $X_\Pi^n$ is isomorphic to the {\it Losev-Manin} \cite{LM} compactification $\bar L_{0,n+3,2}$ of the moduli space $M_{0,n+3}$ (see more on this in the recent paper \cite{BT-2024}).

\section{Cyclohedra and Fa\`a di Bruno's formula}

There is another remarkable family of the graph-polyhedra: {\it cyclohedra} (or {\it Bott-Taubes polytopes}) $W_n$, corresponding to the $n$-cycle graph. 
In particular, $W_3$ is hexagon and $W_4$ is $3D$ polyhedron shown on Fig. 3.

These polytopes first appeared in Bott and Taubes \cite{BT} in connection with the link invariants, although implicitly they were already in the earlier work by Kontsevich \cite{Kont} (see more detail in \cite{Dev2}). 

\begin{figure}[h]
\includegraphics[width=35mm]{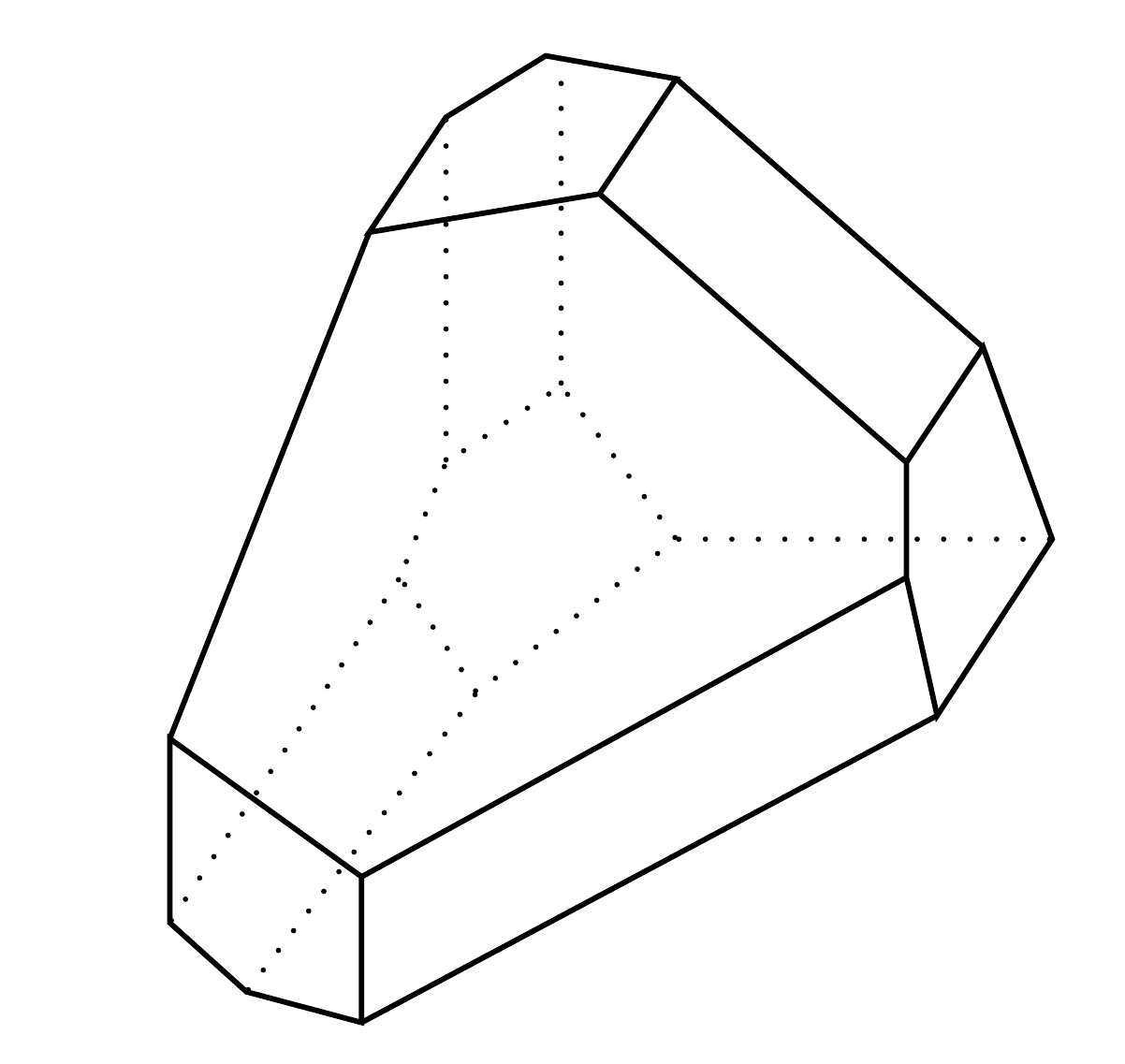} 
\caption{3D cyclohedron $W_4$.}
\end{figure} 

Its combinatorics was studied by Simion \cite{Simion}, who had shown, in particular, that the number $f_k(W_{n+1})$ of faces of dimension $k$ of $W_{n+1}$ equals
$$
f_k(W_{n+1})={2n-k \choose n-k}{n \choose k}.
$$
In particular, $W_{n+1}$ has ${2n \choose n}$ vertices and $n(n+1)$ facets.
The corresponding boundary formula
$
d W_n=n\sum_{i=1}^{n-1} \mathfrak a_{i}\times W_{n-i}
$
implies that the generating function
$
\mathcal C(t,x):=\sum_{n=1}^\infty F_{W_n}(t)\frac{x^{n}}{n},
$
satisfies the following non-autonomous linear differential equation
$$
 \mathcal C_t=\mathcal A(t,x)\mathcal C_x, \quad  \mathcal C(0,x)=W(x):=\sum_{n=1}^\infty W_n\frac{x^{n}}{n},
$$
where $\mathcal A(t,x)$ are the corresponding function for the associahedra (\ref{asper}) and we set for simplicity $\mathfrak a_1=W_1=1$. 
 It is best to combine these two functions together as the solution of the following system of PDEs
\beq{system}
 \mathcal A_t=\mathcal A \mathcal A_x, \quad
 \mathcal C_t=\mathcal A\mathcal C_x.
\eeq
The solution of this system with the prescribed initial values is given by (\ref{solas}) and
$$
\mathcal C(t,x)=W(x+t\mathcal A(t,x)).
$$
Substituting here $t=-1$ we have the following result. Let $W^\circ_n$ be the motivic interior (\ref{motivic}) of the cyclohedron $W_n.$

\begin{Theorem} The generating function of the  motivic interiors of the cyclohedra
\beq{cycl}
W^\circ(x):=\sum_{n=1}^\infty \frac{W^\circ_n}{n} x^n
=W(\beta(x))
\eeq
is the result of the substitution into $W(x)$ the series $\beta(x)$, which is the compositional inverse of the associahedral series $\alpha(x)$ given by (\ref{associ}).
\end{Theorem}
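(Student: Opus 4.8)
The plan is to recognize that $W^\circ(x)$ is simply the value of the cyclohedral generating function $\mathcal{C}(t,x)$ at $t=-1$, and then to read the answer off the closed form of $\mathcal{C}$ recorded just above the statement. First I would note that, applying the definition of the motivic interior (\ref{motivic}) facewise to each $W_n$, we have $W^\circ_n=F_{W_n}(-1)$, so that
$$
W^\circ(x)=\sum_{n\geq 1}\frac{W^\circ_n}{n}x^n=\sum_{n\geq 1}\frac{F_{W_n}(-1)}{n}x^n=\mathcal{C}(-1,x).
$$
Thus the entire theorem reduces to evaluating the solution of the linear transport equation $\mathcal{C}_t=\mathcal{A}\mathcal{C}_x$ at the special value $t=-1$.

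Next I would confirm the characteristic-curve formula $\mathcal{C}(t,x)=W(x+t\mathcal{A}(t,x))$ stated above. Setting $\phi(t,x)=x+t\mathcal{A}(t,x)$ and differentiating $\mathcal{C}=W(\phi)$, one gets $\mathcal{C}_t=W'(\phi)\phi_t$ and $\mathcal{C}_x=W'(\phi)\phi_x$, so the PDE $\mathcal{C}_t=\mathcal{A}\mathcal{C}_x$ will hold as soon as $\phi_t=\mathcal{A}\phi_x$. Computing $\phi_t=\mathcal{A}+t\mathcal{A}_t$ and $\phi_x=1+t\mathcal{A}_x$, this identity collapses to $t\mathcal{A}_t=t\mathcal{A}\mathcal{A}_x$, which is exactly the Hopf equation (\ref{as}) for $\mathcal{A}$ (with the normalisation $\mathfrak{a}_1=1$); and the initial condition $\mathcal{C}(0,x)=W(x)$ is immediate from $\phi(0,x)=x$. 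Hence the asserted solution formula is verified.

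Finally I would substitute $t=-1$, obtaining $\mathcal{C}(-1,x)=W(x-\mathcal{A}(-1,x))$. By (\ref{asso}), with $\mathfrak{a}_1=1$, the series $\mathcal{A}(-1,x)$ is precisely $A^\circ(x)=\sum_n\mathfrak{a}^\circ_n x^{n+1}$, so $x-\mathcal{A}(-1,x)=x-A^\circ(x)=\beta(x)$ by the definition (\ref{associ}) of the compositional inverse series. Chaining the three steps gives $W^\circ(x)=\mathcal{C}(-1,x)=W(\beta(x))$, as required.

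The computation is short, so the only genuine point to keep in mind --- and the main (mild) obstacle --- is that everything lives in the formal ring $\mathfrak{P}[t][[x]]$, and one must check that the compositions are legitimate formal operations rather than rely on any analytic meaning. This is clear: both $\beta(x)=x+O(x^2)$ and $W(x)=x+O(x^2)$ (recalling $W_1=1$) have vanishing constant term, so $W(\beta(x))$ is a well-defined formal power series, and the chain-rule manipulation above is a valid identity coefficient-by-coefficient in $x$, with coefficients that are polynomials in the generators of $\mathfrak{P}$.
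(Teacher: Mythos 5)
Your proposal is correct and follows the same route as the paper: identify $W^\circ(x)$ with $\mathcal{C}(-1,x)$, use the characteristic solution $\mathcal{C}(t,x)=W(x+t\mathcal{A}(t,x))$ of the transport equation $\mathcal{C}_t=\mathcal{A}\mathcal{C}_x$, and evaluate at $t=-1$ using $x-\mathcal{A}(-1,x)=x-A^\circ(x)=\beta(x)$. Your explicit chain-rule verification of the solution formula and the remark on formal well-definedness of the composition are slightly more detailed than the paper's presentation, but the argument is the same.
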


The result of the substitution of formal series can be given by the following combinatorial formula attributed to Fa\`a di Bruno (see the history in \cite{Johnson}).
It has several important interpretations within the theory of Lie and Hopf algebras and operads \cite{FM}, as well as relations with integrable systems \cite{ShabatEfendiev}.

For the formal power series {\it Fa\`a di Bruno formula} has the following form.
Let $f(x)=\sum_{n\geq 1}a_nx^n, \,\, g(x)=\sum_{n\geq 1}b_nx^n,$ then the coefficients of the composition
$f(g(x))=\sum_{n\geq 1}c_nx^n$
can be written as
\beq{FdB}
c_n=\sum {\frac {k!}{k_{1}!\,k_{2}!\,\cdots \,k_{n}!}}a_k\prod_{j=1}^nb_j^{k_j},
\eeq
where $k=k_1+\dots+k_n$ and the summation is taken over all integer $(k_1,\dots, k_n), k_i\geq 0$ such that $k_1+2k_2+\dots+nk_n=n.$  
In particular, we have
$$
c_1=a_1b_1, \,\, c_2=a_1b_2+a_2b_1,\,\, c_3=a_1b_3+2a_2b_1b_3+a_3b_1^3,
$$
$$
c_4=a_1b_4+a_2(b_2^2+2b_1b_3)+3a_3b_1^2b_2+a_4b_1^4.
$$
Combining this with the inversion formula (\ref{invb}), one can compute the coefficients $d_n$ of the function
$$h(x)=f(g^{(-1)}(x))=\sum_{n\geq 1}d_nx^n,$$ assuming for convenience that $a_1=b_1=1$:
$$
d_1=1, \,\, d_2=a_2-b_2,\,\, d_3=a_3-2a_2b_2+2b_2^2-b_3,
$$
$$
d_4=a_4-3a_3b_3+a_2(5b_2^2-2b_3)+-5b_2^3+5b_2b_3-b_4.
$$

The combinatorial formulas for the corresponding modification of the Bell polynomials $d_n=D_n(a,b)$ in terms of the so-called pointed non-crossing partitions can be found in \cite{Bast} (see Theorem 5.8). 

As a corollary we have the following polytopal interpretation of the corresponding polynomials, which was first discovered by Aguiar and Bastidas in relation with the Fa\`a di Bruno Hopf monoid \cite{Bast} (see Theorem 5.6). Note that both composition and inversion of formal series are naturally embedded into the so-called Fa\`a di Bruno Hopf algebra, playing important role in various problems of mathematics and physics, see \cite{Mvondo} and references therein. 

\begin{Corollary}
The motivic interior of the cyclohedron $W_n$ can be written in the algebra of polytopes as
\beq{cyc}
W^\circ_n =n D_n(a,b), \quad a_j=\frac{W_j}{j}, \, b_j=\mathfrak a_{j-1}.
\eeq
\end{Corollary}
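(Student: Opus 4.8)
The plan is to recognise the identity of Theorem 6.1, namely $W^\circ(x)=W(\beta(x))$ from \mref{cycl}, as a special case of the Fa\`a di Bruno composition $h(x)=f(g^{(-1)}(x))$ whose coefficients are by definition the polynomials $d_n=D_n(a,b)$, and then simply to read off the coefficients of $W^\circ(x)$. No genuinely new computation is required; the content is entirely in matching the two series correctly.

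First I would fix the two input series. Setting $a_j=W_j/j$ gives
\[
f(x)=\sum_{j\geq 1}a_j x^j=\sum_{j\geq 1}\frac{W_j}{j}\,x^j=W(x),
\]
the generating function introduced just before Theorem 6.1. Setting $b_j=\mathfrak a_{j-1}$, and recalling that $\mathfrak a_0$ is the point (the unit of the polytopal ring), gives
\[
g(x)=\sum_{j\geq 1}b_j x^j=\sum_{j\geq 1}\mathfrak a_{j-1}\,x^j=x\sum_{k\geq 0}\mathfrak a_k x^k=\alpha(x),
\]
where $\alpha(x)$ is the associahedral series \mref{associ}. In particular the normalisations $a_1=W_1=1$ and $b_1=\mathfrak a_0=1$ agree with the convention $a_1=b_1=1$ under which the explicit expressions for the $d_n$ preceding \mref{cyc} were written.

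Next, since $g=\alpha$, its compositional inverse is $g^{(-1)}=\beta$, the series from \mref{associ}. Hence the Fa\`a di Bruno output is
\[
h(x)=f\bigl(g^{(-1)}(x)\bigr)=W(\beta(x))=W^\circ(x)=\sum_{n\geq 1}\frac{W^\circ_n}{n}\,x^n,
\]
the last two equalities being precisely \mref{cycl} of Theorem 6.1. Comparing coefficients of $x^n$ yields $d_n=D_n(a,b)=W^\circ_n/n$, that is $W^\circ_n=n\,D_n(a,b)$, as claimed.

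The only real work, and hence the step most prone to error, is the bookkeeping of indices and normalisations: one must verify the index shift that turns $b_j=\mathfrak a_{j-1}$ into $g=\alpha$ (rather than, say, $\alpha/x$), and confirm that the unit conventions $\mathfrak a_0=W_1=1$ make the normalised composition applicable so that $D_n(a,b)$ is literally the $n$-th coefficient of $f\circ g^{(-1)}$. Once these identifications are pinned down, the corollary follows immediately from Theorem 6.1 together with the definition of the polynomials $D_n$, and one may cross-check the first few values against the expressions for $d_n$ displayed above.
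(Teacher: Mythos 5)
Your proposal is correct and is exactly the argument the paper intends: Theorem 6.1 gives $W^\circ(x)=W(\beta(x))$ with $\beta=\alpha^{(-1)}$, and identifying $f=W$ (so $a_j=W_j/j$) and $g=\alpha$ (so $b_1=1$, $b_j=\mathfrak a_{j-1}$) makes $W^\circ_n/n$ literally the coefficient $d_n=D_n(a,b)$ of $f\circ g^{(-1)}$. Your care with the index shift and the normalisation $\mathfrak a_0=W_1=1$ is the only nontrivial bookkeeping, and it checks out against the displayed formulas for $d_3$ and $W^\circ_3$.
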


In particular, we have the formulas
$$
W^\circ_1=W_1, \,\, W^\circ_2=W_2-2\mathfrak a_1,\,\, W^\circ_3=W_3-3W_2\mathfrak a_1-3\mathfrak a_2+6\mathfrak a_1^2,
$$
$$
W^\circ_4=W_4-4W_3\mathfrak a_1+10 W_2\mathfrak a_1^2 -4W_2\mathfrak a_2-4W_1\mathfrak a_3+20W_1 \mathfrak a_1 \mathfrak a_2-20W_1\mathfrak a_1^3.
$$
This is in agreement with the fact that $W_3$ is a hexagon and $W_4$ has 4 hexagonal, 4 pentagonal and 4 quadrilateral faces, 30 edges and 20 vertices (see Fig. 3).
Again since cyclohedra are algebraically independent, these formulae are universal.

\section{Equation and formulas for the stellohedra}

There is another interesting graph-associahedron called {\it stellohedron} $S_n$, corresponding to the star-graph with $n$ vertices (see Fig.4). In particular, $S_3=\mathfrak a_3$ is a pentagon and $S_4$ is the polyhedron shown on Fig.4. For more details about combinatorics of stellohedra we refer to \cite{B-2008,PRW}.

\begin{figure}[h]
\includegraphics[width=35mm]{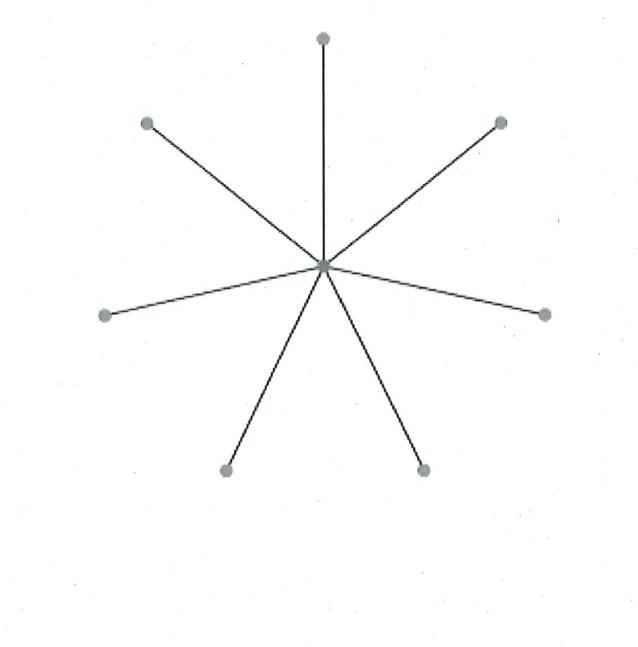} \quad \quad \includegraphics[width=35mm]{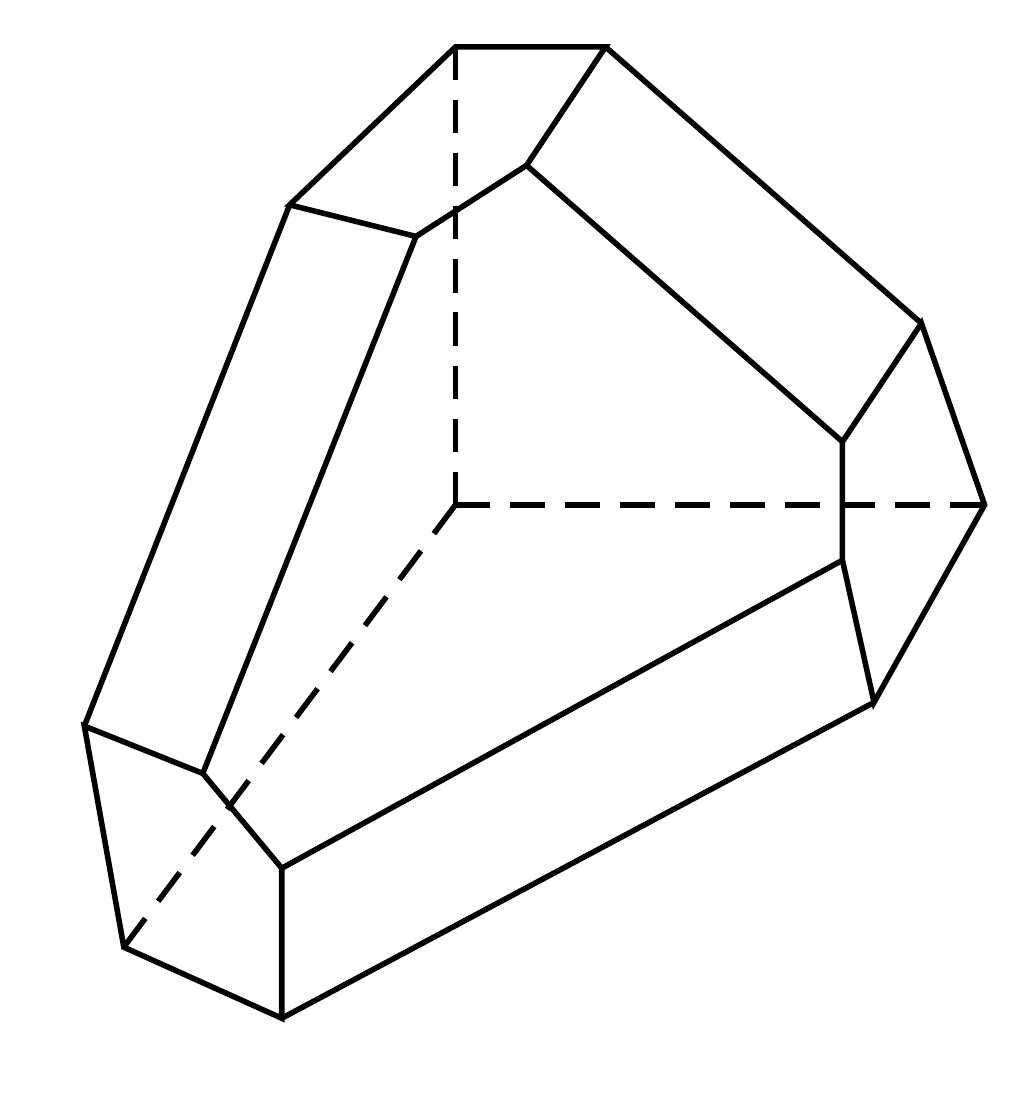} 
\caption{Star-graph and 3D stellohedron $S_4$.}
\end{figure} 

The corresponding boundary formula
$$
 d S_n=(n-1) S_{n-1}+\sum_{i=1}^{n-1}  {n-1 \choose i}S_i \times \pi_{n-i}
$$
implies that the corresponding exponential generating functions of stellohedra and permutohedra
$$ \mathcal{S} (t,x):=\sum_{n=0}^\infty F_{S_{n+1}}(t)\frac{x^{n}}{n!}, \quad  \mathcal P(t,x):=\sum_{n=1}^\infty F_{\pi_n}(t)\frac{x^{n}}{n!}
$$
satisfy the following system of equations
\beq{system}
\mathcal P_t=\mathcal P^2, \quad
\mathcal S_t= (x+ \mathcal P) \mathcal S.
\eeq
For given initial data $\mathcal P(0,x)=\Pi(x), \, \mathcal S(0,x)=S(x):=\sum_{n=0}^\infty S_{n+1}\frac{x^{n}}{n!}$ this system can be easily solved explicitly as
$$
\mathcal P(t,x)=\frac{\Pi(x)}{1-t \Pi(x)}, \quad \mathcal S(t,x)=\frac{e^{tx} S(x)}{1-t \Pi(x)}
$$
(see \cite{B-2008,BP}).
Substituting here $t=-1$ and using theorem 3.5 we have  

\begin{Theorem} The exponential generating function of the  motivic interiors of the stellohedra can be expressed via permutohedral generating functions (\ref{permut_inv}) by the formula
\beq{stelloh}
S^\circ(x):=\sum_{n=0}^\infty S^\circ_{n+1}\frac{x^n}{n!}=e^{-x}\Pi^\circ(x) S(x)=e^{-x}\Pi^{-1}(x)S(x).
\eeq
\end{Theorem}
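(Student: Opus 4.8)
The plan is to obtain the theorem directly from the explicit solution of the system \mref{system}, which is already in hand, by specialising the deformation parameter to $t=-1$. The starting observation is that setting $t=-1$ in any face-generating function produces the motivic interior: by \mref{motivic} we have $F_{S_{n+1}}(-1)=S_{n+1}^\circ$, so that
\[
\mathcal S(-1,x)=\sum_{n=0}^\infty F_{S_{n+1}}(-1)\frac{x^n}{n!}=\sum_{n=0}^\infty S_{n+1}^\circ\frac{x^n}{n!}=S^\circ(x).
\]
Thus the whole proof reduces to evaluating the given closed form of $\mathcal S(t,x)$ at $t=-1$ and recognising the resulting scalar factor.

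The second step is the substitution itself. From $\mathcal S(t,x)=e^{tx}S(x)/(1-t\,\Pi(x))$ I read off
\[
S^\circ(x)=\mathcal S(-1,x)=\frac{e^{-x}S(x)}{1+\Pi(x)}.
\]
If a self-contained derivation of this closed form were wanted, I would note that for fixed $x$ the equation $\mathcal S_t=(x+\mathcal P)\mathcal S$ is linear in $t$, so integrating $\mathcal S_t/\mathcal S=x+\Pi/(1-t\,\Pi)$ from $0$ to $t$ contributes the factor $e^{tx}$ from the term $x$ and $-\log(1-t\,\Pi)$ from $\mathcal P$, reproducing the denominator $1-t\,\Pi(x)$; but since the solution is stated before the theorem I would simply invoke it.

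The decisive third step is to identify $1/(1+\Pi(x))$ with the multiplicative inverse of the permutohedral series. Writing $\Pi(x)=\sum_{n\geq 1}\pi_n x^n/n!$ for the series appearing in the solution, we have $1+\Pi(x)=P(x)$, which is precisely the exponential generating series of permutohedra in \mref{permut_inv}. Theorem 3.5 asserts $P(x)P^\circ(x)\equiv 1$, hence $\Pi^\circ(x)=P^\circ(x)=1/(1+\Pi(x))=\Pi^{-1}(x)$. Substituting this into the previous display yields both claimed forms $S^\circ(x)=e^{-x}\Pi^\circ(x)S(x)=e^{-x}\Pi^{-1}(x)S(x)$, which finishes the argument.

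I do not expect any genuine analytic obstacle: granted the explicit solution of \mref{system}, the proof is pure bookkeeping. The only point requiring care is notational — the symbol $\Pi$ denotes the series without constant term in the solution formula, whereas the multiplicative inverse $\Pi^{-1}$ in the statement refers to $1+\Pi(x)=P(x)$, which carries the constant term $1$ and is the object inverted in Theorem 3.5. Making this identification explicit, as in the third step, is all that is needed to render the appeal to \mref{permut_inv} unambiguous.
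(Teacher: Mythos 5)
Your proposal is correct and follows exactly the paper's own (very terse) argument: substitute $t=-1$ into the explicit solution $\mathcal S(t,x)=e^{tx}S(x)/(1-t\,\Pi(x))$ and invoke Theorem 3.5 to identify $1/(1+\Pi(x))$ with $\Pi^\circ(x)=\Pi^{-1}(x)$. Your extra care about the constant term ($\Pi$ versus $P=1+\Pi$) is a useful clarification of a notational wrinkle the paper glosses over, but it is not a different method.
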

In particular, using formulas (\ref{multinv}) we have
$$
S^\circ_4=S_4-3S_3(1+\pi_1)+3S_2(1+2\pi_1+2\pi_1^2-\pi_2)
$$
$$-S_1(1+3\pi_1+6\pi_1^2+6\pi_1^3-3\pi_2-6\pi_1\pi_2+\pi_3)
$$
in agreement with the fact that 3D stellohedron has 1 hexagonal, 6 pentagonal and 3 quadrilateral faces, 24 edges and 16 vertices (see Fig. 4).

\begin{Corollary}
The exponential generating function of the numbers of vertices $V_n$ of the stellohedra $S_n$ has the form
 \beq{cycver}
\sum_{n=0}^\infty V_{n+1}\frac{x^{n}}{n!} =\frac{e^x}{1-x}.
\eeq
\end{Corollary}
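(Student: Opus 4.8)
The plan is to read off the vertex numbers from the explicit closed form $\mathcal S(t,x)=e^{tx}S(x)/(1-t\,\Pi(x))$ by combining the augmentation homomorphism with a leading-coefficient extraction in the variable $t$. Let $\epsilon\colon\mathfrak P\to\mathbb Z$ be the ring homomorphism sending every indecomposable polytope (and hence, by Theorem 2.1, every polytope) to $1$. Applied to $F_P(t)$ for a simple polytope $P$ of dimension $d$, the map $\epsilon$ collapses each face to $1$, so that $\epsilon(F_P(t))=\sum_{k=0}^{d}f_{d-k}(P)\,t^{k}$ is the ordinary $f$-polynomial of $P$; its top coefficient, that of $t^{d}$, is $f_0(P)$, the number of vertices. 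Since $\dim S_{n+1}=n$, this identifies $V_{n+1}$ with the coefficient of $t^{n}$ in $\epsilon(F_{S_{n+1}}(t))$.

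First I would turn this leading-coefficient extraction into an operation on the whole generating series. Because $\epsilon(F_{S_{n+1}}(t))$ has degree exactly $n$ in $t$, the substitution $x\mapsto x/t$ sends $\epsilon(\mathcal S(t,x))=\sum_{n\geq0}\epsilon(F_{S_{n+1}}(t))\frac{x^{n}}{n!}$ to $\sum_{n\geq0}\big(\sum_{k=0}^{n}f_{n-k}(S_{n+1})\,t^{k-n}\big)\frac{x^{n}}{n!}$, whose coefficient of $x^{n}$ is a polynomial in $1/t$ with limit $f_0(S_{n+1})/n!=V_{n+1}/n!$ as $t\to\infty$. Hence the sought generating function is $\lim_{t\to\infty}\epsilon(\mathcal S(t,x/t))$, the limit taken coefficientwise in $x$; this is finite for every $n$ precisely because of the degree count $\dim S_{n+1}=n$.

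Then I would apply $\epsilon$ to the closed form. As $\epsilon$ fixes the scalar factor $e^{tx}$ and is multiplicative, and since $\epsilon(S(x))=\sum_{n\geq0}\frac{x^{n}}{n!}=e^{x}$ while $\epsilon(\Pi(x))=\sum_{n\geq1}\frac{x^{n}}{n!}=e^{x}-1$, one gets $\epsilon(\mathcal S(t,x))=\dfrac{e^{tx}\,e^{x}}{1-t(e^{x}-1)}$. Substituting $x\mapsto x/t$ yields $\dfrac{e^{x}\,e^{x/t}}{1-t(e^{x/t}-1)}$, and passing to the limit uses only $e^{x/t}\to1$ and $t(e^{x/t}-1)\to x$, producing $\dfrac{e^{x}}{1-x}$, which is the claim.

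The computation is otherwise routine, so the one place that needs care is the justification of the extraction device: that replacing $x$ by $x/t$ and letting $t\to\infty$ genuinely isolates the top $t$-coefficient $f_0$ of each $f$-polynomial, which holds exactly because $S_{n+1}$ has dimension $n$. As a check, the formula gives $V_{n+1}=\sum_{k=0}^{n}n!/k!$, i.e.\ $1,2,5,16,\dots$, in agreement with $S_1$ a point, $S_2$ a segment, $S_3=\mathfrak a_3$ a pentagon, and the $16$ vertices of the $3D$ stellohedron noted above.
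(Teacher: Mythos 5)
Your argument is correct: the paper states this corollary without proof, and your derivation proceeds exactly from the closed form $\mathcal S(t,x)=e^{tx}S(x)/(1-t\,\Pi(x))$ that the paper establishes, with the augmentation $\epsilon$ and the substitution $x\mapsto x/t$, $t\to\infty$ serving as the standard device for isolating the top $t$-coefficient (the vertex count) of each $f$-polynomial. A minor simplification worth noting: since every vertex is literally the unit of $\mathfrak P$, the number $V_{n+1}$ is already the coefficient of $t^n x^n/n!$ in $\mathcal S(t,x)$ itself, so one could extract the diagonal coefficients directly without invoking $\epsilon$; but your justification of the limit via the degree count $\dim S_{n+1}=n$ is complete and the numerical check $1,2,5,16,\dots$ confirms it.
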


In particular, this implies that the number of vertices is
$$
V_{n}=\sum_{k=0}^{n-1}\frac{(n-1)!}{k!}.
$$
More general formulas, expressing the number of faces of stellohedra of any dimension, can be found in \cite{B-2008} (Section 9) and \cite{PRW} (Section 10.4).

For completeness consider also the families of simplices with $n$ vertices $\Delta_n$ and $n$-dimensional cubes $\Box_n$
and define
$$
\Delta(x)=\sum_{n\geq 1} \Delta_n \frac{x^n}{n!}, \quad \Box(x)=\sum_{n\geq 0} \Box_n \frac{x^n}{n!}.
$$
The corresponding exponential generating functions 
$$ \Delta (t,x):=\sum_{n\geq 1} F_{\Delta_n}(t)\frac{x^{n}}{n!}, \quad  \Box(t,x):=\sum_{n\geq 0} F_{\Box_n}(t)\frac{x^{n}}{n!}
$$
satisfy the differential equations \cite{BP}
$$
 \Delta_t(t,x)= x \Delta(t,x), \quad
\Box_t(t,x)=2x \Box(t,x).
$$
 Solving them explicitly with initial data $\Delta(0,x)=\Delta(x), \, \Box(0,x)=\Box(x)$:
$$
\Delta(t,x)=\Delta(x)e^{tx}, \,\, \Box(t,x)=\Box(x)e^{2tx},
$$
we come to the binomial formulas for the interiors
$$
\Delta^\circ_n=\sum_{k=0}^{n-1}(-1)^k{n \choose k}\Delta_{n-k}, \quad \Box^\circ_n=\sum_{k=0}^{n}(-2)^k{n \choose k}\Box_{n-k}.
$$
It is interesting to note that the polynomials $p_n(t)=F_{\Delta_n}(t)$ form the universal {\it Appell sequence} \cite{Appell}, satisfying the characteristic relation
$
p_n'(t)=np_{n-1}(t)
$
(see \cite{Bateman}, Ch. 19.3).

\section{Concluding remarks}

We have seen that associahedra and permutohedra play a special role in the differential algebra of graph-associahedra, generating subalgebras of the polytopal algebra $\mathfrak P$, which are invariant under the differentiation. This explains their relation with the differential equations and ultimately with the inversion formulas. 

The families of cyclohedra and stellohedra generate the subalgebras of $\mathfrak P$, which can be viewed as the modules over the differential subalgebras generated by associahedra and permutohedra respectively. In the language of operads this was pointed out in \cite{Markl} (see also \cite{Dev2, Stas2}). The description of all such subalgebras and modules over them within the differential algebra of polytopes $\mathfrak P$ seems to be a very interesting open problem. 

The equations which appeared here are easily solvable, which hints the link with the theory of integrable systems. At the combinatorial level this link was already discussed in the literature, see e.g. \cite{Falqui, Lambert,Mvondo,ShabatEfendiev} and references therein. However, a general picture of the relations between combinatorics and integrable systems seems to be quite rich and needs better understanding (see \cite{Adler_1,Adler} for some very interesting thoughts in this direction).

In this relation it is worthy to mention that the Hopf equation $u_t=u u_x$, which appeared in our paper in relation with associahedra, is the dispersionless limit $\varepsilon \to 0$ of the celebrated KdV equation
$$
u_t=u u_x+\varepsilon^2 u_{xxx},
$$
well-known in soliton theory and enumerative algebraic geometry. In particular, according to Witten \cite{Witten} the KdV hierarchy determines a certain generating function of the characteristic numbers of $\bar M_{g,n}$ (see \cite{LZ} for the detail),
which makes the appearance of the inversion formula in the theory of the moduli spaces $\bar M_{0,n}$ (discussed in Section 4) a bit less mysterious.

We would like to comment on the choice of the coefficients $\lambda_n$ in the generating functions $F_P(x)=\sum \lambda_n P_n x^n.$ Most common cases are $\lambda_n=1$ and $\lambda_n=1/n!$, corresponding to the usual and exponential generating functions. However, we have seen that in the case of cyclohedra the most appropriate choice is $\lambda_n=1/n$, related to the usual choice by integration. The role of different choices of $\lambda_n$ in Boas-Buck's approach \cite{BoasBuck} to the generating functions  was emphasized in \cite{BKh}. Note that both composition and multiplication of power series play the central role in this approach, going back to Appell \cite{Appell} and providing umbrella for many classical polynomials. 

Since the graph-associahedra are indecomposable, it is natural to ask if the corresponding $h$-polynomials are irreducible over $\mathbb Q$. 
For example, for the permutohedra the corresponding $h$-polynomials are the Eulerian polynomials $A_n(t)$. It is known that $A_{2k}(t)$ is divisible by $t+1$ (which is $h$-polynomial of the segment), but there is a conjecture that $A_{2k+1}(t)$ and $A_{2k}(t)/(t+1)$ are irreducible over $\mathbb Q$ (see \cite{Heidrich}).
It would be interesting to study similar question for other graph-assiciahedra from our paper.

\section{Acknowledgements.}
We are very grateful to Vsevolod Adler, Alexander Braverman, Nikolai Erokhovets, Alexander Gaifullin and Georgy Shabat for the useful discussions, to Jose Bastidas, who attracted our attention to the preprint \cite{Bast} and to Jim Stasheff for the encouraging comments.

\end{document}